\documentclass[11pt]{article}

\usepackage{amssymb,amsmath,amsfonts,amsthm}
\usepackage{latexsym}
\usepackage{graphics}
\usepackage{indentfirst}
\usepackage{hyperref}
\usepackage[capitalise, noabbrev, nameinlink]{cleveref}
\usepackage{comment}
\usepackage[shortlabels]{enumitem}
\usepackage[english]{babel}
\usepackage{tikz}
\usetikzlibrary{fit,arrows.meta,backgrounds}
\usetikzlibrary{shapes,decorations,graphs,graphs.standard,quotes,backgrounds}
\usetikzlibrary{decorations.markings}
\usepackage{esdiff}
\usepackage{thmtools}
\usepackage{thm-restate}

\definecolor{azure(colorwheel)}{rgb}{0.0, 0.5, 1.0}
\definecolor{Pink}{RGB}{255, 105, 180}

\usetikzlibrary{arrows.meta}

\newtheorem{innercustomthm}{Theorem}
\newenvironment{customthm}[1]
  {\renewcommand\theinnercustomthm{#1}\innercustomthm}
  {\endinnercustomthm}

\newtheorem*{thm*}{Theorem}
\newtheorem{thm}{Theorem}
\newtheorem{lem}[thm]{Lemma}
\newtheorem{pro}[thm]{Proposition}

\newtheorem{ques}[thm]{Question}

\crefname{thm}{Theorem}{Theorems}
\crefname{lem}{Lemma}{Lemmas}

\newcommand{\N}{\mathbb{N}}
\newcommand{\Z}{\mathbb{Z}}

\newcommand{\col}{\mathrm{col}}

\newcommand\set[1]{\left\{ #1 \right\}}

\newcommand\ceil[1]{\left\lceil #1 \right\rceil}

\allowdisplaybreaks

\begin{document}

\title{Covering Families for DP-Coloring of Cartesian Products with Complete Bipartite Graphs}

\author{
Hemanshu Kaul \thanks{School of Computing, Illinois Institute of Technology, Chicago, IL, USA (kaul@illinoistech.edu)}
\and
Jeffrey A. Mudrock \thanks{Department of Mathematics and Statistics, University of South Alabama, Mobile, AL, USA (mudrock@southalabama.edu)}
\and
Emily A. Psyhogios \thanks{Lake Forest College, Lake Forest, IL, USA (psyhogiosear@lakeforest.edu)}
\and
Gunjan Sharma \thanks{Department of Mathematics and Computer Science, Lake Forest College, Lake Forest, IL, USA (gsharma@lakeforest.edu)}
\and
Illia Siutkin \thanks{Lake Forest College, Lake Forest, IL, USA (siutkini@lakeforest.edu)}
\and
Aparna Upadhyay \thanks{Department of Mathematics and Statistics, University of South Alabama, Mobile, AL, USA (aupadhyay@southalabama.edu)}
}

\maketitle

\begin{abstract}
A famous folklore result in list coloring demonstrating that the gap between the list chromatic number and chromatic number of a graph can be arbitrarily large is: $\chi_{\ell}(K_{l,t}) = 1+l$ if and only if $t \geq l^l$. DP-coloring (also called correspondence coloring) is a well-studied generalization of list coloring introduced in 2015. In 2018, Mudrock studied the DP analogue of the aforementioned folklore result. He proved that for $l \in\mathbb{N}$, if $\mu(l)$ is the smallest integer $t$ such that $\chi_{DP}(K_{l,t})=1+l$, then $\left\lceil l^l/l!\right\rceil \leq \mu(l) \leq 1+l^l(\log(l!)+1)/l!$. Recently, Kaul, Mudrock, and Sharma studied a more general version of this problem by studying the smallest $t$ for which $\chi_{DP}(G \square K_{l,t}) = k + l$, where $G$ satisfies certain criticality conditions and $G \square K_{l,t}$ denotes the Cartesian product of $G$ and $K_{l,t}$. 

In this paper, we introduce a notion we call \emph{covering families} that gives a new perspective on these DP-coloring questions.  In particular, if $\kappa(l)$ denotes the minimum size of a covering family of $[l]^l$, we show that $\mu(l)=\kappa(l)$.  We use this equivalence to prove $\mu(4)=12$ and to obtain new general lower bounds on $\mu(l)$. We also prove a general upper bound on the minimum size of covering families which yields an improved general upper bound on $\mu(l)$ and gives improvements on known bounds for related DP-coloring questions involving Cartesian products with complete bipartite graphs.

\medskip

\noindent {\bf Keywords.}  DP-coloring, correspondence coloring, covering family, Cartesian product

\noindent \textbf{Mathematics Subject Classification.} 05C15, 05C30, 05C69. 

\end{abstract}

\section{Introduction}\label{intro}

In this paper all graphs are nonempty, finite, and simple unless otherwise noted. Generally speaking, we follow West~\cite{W01} for terminology and notation. We use $\mathbb{N}$ to denote the set of all natural numbers. For $k \in \mathbb{N}$, $[k]$ denotes the set $\{1,\ldots,k\}$, $\Z_k$ denotes the integers mod $k$, and $[0]$ denotes the empty set.  For a positive real number $x$, $\log(x)$ is the natural log of $x$. For a graph $G$, $V(G)$ and $E(G)$ are the vertex set and edge set of $G$, respectively. If $S \subseteq V(G)$, $G[S]$ is the subgraph of $G$ induced by $S$. For any $S_{1},S_{2} \subseteq V(G)$, $E_{G}(S_{1},S_{2})$ denotes the set consisting of the edges in $E(G)$ that have one endpoint in $S_{1}$ and the other in $S_{2}$. The neighborhood of a vertex $v$ in $G$ is denoted by $N_{G}(v)$ or $N(v)$ when the graph is clear from context. The neighborhood of a set of vertices $S\subseteq V(G)$ is defined as $N(S) = \bigcup_{v \in S}N_{G}(v)$. The degree of a vertex $v\in V(G)$ is defined as the number of vertices in $G$ adjacent to $v$; we denote this by $d_G(v)$ or simply $d(v)$ when the graph is clear from context.

\subsection{Graph coloring, list coloring, and DP-coloring}

A \emph{proper $k$-coloring} of a graph $G$ is a function $f$ that assigns an element of $[k]$ to each $v \in V(G)$ such that $f(v) \neq f(u)$ whenever $uv \in E(G)$. We say that $G$ is \emph{$k$-colorable} if it has a proper $k$-coloring. The \emph{chromatic number} of $G$, denoted by $\chi(G)$, is the smallest $k \in \mathbb{N}$ such that there exists a proper $k$-coloring of $G$. The \emph{coloring number} of a graph $G$, denoted by $\col(G)$, is the smallest integer $d$ for which there exists an ordering, $v_1, \ldots, v_n$, of the vertices of $G$ such that each vertex $v_i$ has at most $d-1$ neighbors among $v_1, \ldots, v_{i-1}$. For example, $\col(K_{l,t}) = \min\{l,t\}+1$.

List coloring is a generalization of classical vertex coloring. It was introduced in the 1970s independently by Vizing~\cite{V76} and Erd\H{o}s, Rubin, and Taylor~\cite{ET79}. A \emph{list assignment} of $G$ is a function $L$ with domain $V(G)$ that assigns a set of colors to each $v \in V(G)$. If $|L(v)| = k$ for each $v \in V(G)$, then $L$ is called a \emph{$k$-assignment} of $G$. The graph $G$ is \emph{$L$-colorable} if there exists a proper coloring $f$ of $G$ such that $f(v) \in L(v)$ for each $v \in V(G)$ (we refer to $f$ as a \emph{proper $L$-coloring} of $G$). The \emph{list chromatic number} of $G$, denoted by $\chi_{\ell}(G)$, is the smallest $k$ such that there exists a proper $L$-coloring of $G$ for every $k$-assignment $L$ of $G$. It immediately follows that for any graph $G$, $\chi(G) \leq \chi_\ell(G) \leq \col(G)$. The first inequality may be strict since it is known that the gap between $\chi(G)$ and $\chi_{\ell}(G)$ can be arbitrarily large; for example, $\chi_{\ell}(K_{l,t}) = l+1$ if and only if $t \geq l^l$, but all bipartite graphs are $2$-colorable.
 
DP-coloring (also called correspondence coloring) is a generalization of list coloring that was introduced by Dvo\v{r}\'{a}k and Postle~\cite{DP15} in 2015. Intuitively, DP-coloring is a generalization of list coloring where each vertex in the graph still gets a list of colors, but the identification of which colors are different can vary from edge to edge. We now give formal definitions. A \emph{DP-cover} of a graph $G$ is a pair $\mathcal{H} = (L,H)$, where:

\begin{itemize}
\item $H$ is a graph and $L$ is a function assigning to each $v \in V(G)$ a set $L(v) \subseteq V(H)$,

\item the sets $L(v)$ for $v \in V(G)$ are disjoint sets, independent sets in $H$, and satisfy $V(H) = \bigcup_{v \in V(G)} L(v)$, 

\item if $u \neq v$ and $E_H(L(u),L(v)) \neq \emptyset$, then $uv \in E(G)$ and $E_H(L(u), L(v))$ is a matching.

\end{itemize}

We stress that the matchings between $L(u)$ and $L(v)$ for $uv \in E(G)$ need not be perfect (and may even be empty). The vertices of $H$ are referred to as \emph{colors}. In this paper, cover always refers to a DP-cover. Suppose $\mathcal{H} = (L,H)$ is a cover of $G$. An \emph{$\mathcal{H}$-coloring} of $G$ is an \emph{independent transversal} of $\mathcal{H}$ which is an independent set $I$ in $H$ of size $|V(G)|$ with the property $|I \cap L(u)|=1$ for each $u \in V(G)$. We say $\mathcal{H}$ is \emph{$k$-fold} if $|L(u)|=k$ for each $u \in V(G)$. We say $\mathcal{H}$ is a \emph{bad cover} of $G$ if $G$ does not have an $\mathcal{H}$-coloring.  Note that a bad cover of $G$ need not be $k$-fold for some $k \in \N$.  If $\mathcal{H}$ is a bad cover of $G$ and $\mathcal{H}$ is $k$-fold, we say that $\mathcal{H}$ is a \emph{bad $k$-fold cover} of $G$. A $k$-fold cover $\mathcal{H}$ is a \emph{full cover} if for each $uv \in E(G)$, the matching $E_{H}(L(u),L(v))$ is perfect. The \emph{DP-chromatic number} of $G$, denoted by $\chi_{DP}(G)$, is the smallest $k \in \N$ such that $G$ has an $\mathcal{H}$-coloring whenever $\mathcal{H}$ is a $k$-fold cover of $G$.

Given a $k$-assignment $L$ for a graph $G$, it is easy to construct a $k$-fold cover $\mathcal{H}$ of $G$ such that $G$ has an $\mathcal{H}$-coloring if and only if $G$ has a proper $L$-coloring. It follows that $\chi_\ell(G) \leq \chi_{DP}(G)$. In general, for any graph $G$, $\chi(G) \leq \chi_\ell(G) \leq \chi_{DP}(G) \leq \col(G)$, and these inequalities may be strict.

The example of complete bipartite graphs mentioned above naturally leads to the analogous question for DP-coloring. In 2018, Mudrock~\cite{M18} studied this question. For $l\in\N$, let $\mu(l)$ denote the smallest integer $t$ such that $\chi_{DP}(K_{l,t})=1+l.$ Mudrock proved the following.
\begin{thm}[\cite{M18}] \label{thm: mudrock}
For each $l\in\N$, $\left\lceil l^l/l!\right\rceil
\leq \mu(l)
\leq
1+l^l(\log(l!)+1)/l!.$
\end{thm}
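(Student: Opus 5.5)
We have to prove both bounds on $\mu(l)$. Fix $K_{l,t}$ with parts $A=\{a_1,\dots,a_l\}$ and $B=\{b_1,\dots,b_t\}$, and consider $l$-fold covers $\mathcal{H}=(L,H)$. Since $\col(K_{l,t})\le l+1$, we always have $\chi_{DP}(K_{l,t})\le l+1$. So $\mu(l)$ is the least $t$ for which a bad $l$-fold cover exists, and both bounds reduce to understanding when such a cover exists.

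We may assume the cover is full, because adding edges to $H$ only destroys colorings. Write $L(a_i)=\{(a_i,1),\dots,(a_i,l)\}$. A choice of colors on $A$ is a tuple $c=(c_1,\dots,c_l)\in[l]^l$. For each $b_j$, the perfect matchings determine, for each color $x\in L(b_j)$, a tuple $\phi_j(x)\in[l]^l$: its $i$th coordinate is the color of $a_i$ matched to $x$. Then $c$ blocks $b_j$ exactly when $c$ agrees in some coordinate with every $\phi_j(x)$, $x\in L(b_j)$. The cover is bad iff every $c\in[l]^l$ blocks some $b_j$.

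For fixed $j$, each coordinate $i$ gives a bijection $x\mapsto\phi_j(x)_i$. For a tuple $c$ to be blocked by $b_j$, there must be a function $\sigma:L(b_j)\to[l]$ with $c_{\sigma(x)}=\phi_j(x)_{\sigma(x)}$ for all $x$. If two colors $x\ne y$ had $\sigma(x)=\sigma(y)=i$, then $c_i$ would equal two distinct values, which is impossible. So $\sigma$ is a bijection, and $c$ is determined by $\sigma$. Hence each $b_j$ blocks at most $l!$ tuples.

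\emph{Lower bound.} Since the $t$ vertices of $B$ must together block all $l^l$ tuples, $t\cdot l!\ge l^l$, so $t\ge \lceil l^l/l!\rceil$.

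\emph{Upper bound.} We construct a bad cover probabilistically, and this is the main step. For each $b_j$, independently choose uniformly random permutations defining the matchings between $L(b_j)$ and each $L(a_i)$. The $l!$ tuples blocked by $b_j$, namely $c^\sigma$ with $c^\sigma_{\sigma(x)}=\phi_j(x)_{\sigma(x)}$, are then distinct, since distinct $\sigma$ give distinct tuples by bijectivity. By symmetry, a fixed tuple $c$ is blocked by $b_j$ with probability exactly $l!/l^l$.

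Set $p=l!/l^l$. The expected number of tuples blocked by none of $b_1,\dots,b_t$ is $l^l(1-p)^t\le l^l e^{-pt}$. Choose $t_0=\lceil \log(l!)/p\rceil$. Then at most $l^l/l!=1/p$ tuples are expected to remain uncovered, so some choice of covers for $b_1,\dots,b_{t_0}$ leaves at most $1/p$ tuples unblocked. Each remaining tuple can be blocked by one extra vertex of $B$: match $L(b)$ so that every color of $b$ is adjacent to $(a_i,c_i)$ for a fixed $i$. This gives a bad cover with
\[
t\le \frac{\log(l!)}{p}+1+\frac1p = 1+\frac{l^l(\log(l!)+1)}{l!}.
\]

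Two loose ends need care. First, the count "at most $1/p$" must be an integer bound, and the rounding has to fit the stated form. Second, we need $\mu(l)\le t$ whenever a bad cover exists at $t$; this holds by monotonicity, since adding vertices to $B$ with arbitrary matchings keeps the cover bad.
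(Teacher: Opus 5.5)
Your proof is correct and is essentially the covering-family argument of the paper: the $l!$ tuples blocked by $b_j$ are exactly the set $A_\pi$ for the permutation tuple read off from $b_j$'s matchings, as in the proof of Theorem~\ref{thm: mucover}. Your lower bound is the $s=1$ case of Theorem~\ref{thm: uptheoddlow}, and your upper bound is the first-moment-plus-alteration argument that Lemma~\ref{lem: letzgoupper} sharpens by using $\lambda=-\log(1-p)$ and covering leftover tuples two at a time.

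One step needs fixing. As you phrased it, you cannot make every color of the extra vertex $b$ adjacent to the single vertex $(a_i,c_i)$, because $E_H(L(a_i),L(b))$ must be a matching. Instead, write $L(b)=\{x_1,\dots,x_l\}$ and match $x_i$ to $(a_i,c_i)$ for each $i\in[l]$; by your own characterization of blocked tuples, this blocks $c$.
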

It is also mentioned in~\cite{M18} that $\mu(1)=1$, $\mu(2)=2$, and $\mu(3)=6$. One of the motivations for the present work is to better understand the combinatorial structures underlying constructions of bad covers for these types of problems.

\subsection{Counting Colorings}
For $k \in \N$, the \emph{chromatic polynomial} of a graph $G$, denoted by $P(G,k)$, is equal to the number of proper $k$-colorings of $G$. As the name suggests, it can be shown that $P(G,k)$ is a polynomial in $k$ of degree $|V(G)|$ (see~\cite{B12}).  In 1990 the notion of chromatic polynomial was extended to list coloring as follows~\cite{AS90}. If $L$ is a list assignment for $G$, we use $P(G,L)$ to denote the number of proper $L$-colorings of $G$. The \emph{list color function} $P_\ell(G,k)$ is the minimum value of $P(G,L)$ where the minimum is taken over all possible $k$-assignments $L$ for $G$.  Since a $k$-assignment could assign the same $k$ colors to every vertex in a graph, it is clear that $P_\ell(G,k) \leq P(G,k)$ for each $k \in \N$. 

We now turn our attention to DP coloring.  Suppose $\mathcal{H} = (L,H)$ is a $k$-fold cover of $G$. Then, $\mathcal{H}$ is \emph{canonical} if it admits a \emph{canonical labeling}, which is a mapping $\lambda \colon V(H) \to [k]$ such that
\begin{itemize}
\item for each $v \in V(G)$, the restriction of ${\lambda}$ to $L(v)$ is a bijection from $L(v)$ to $[k]$, and
\item for all $uv \in E(G)$ and $c \in L(u)$, $c' \in L(v)$, we have $cc' \in E(H)$ if and only if $\lambda(c) = \lambda(c')$.
\end{itemize}
Suppose $k \in \N$, and $\mathcal{H} = (L,H)$ is a $k$-fold cover of $G$ such that $\mathcal{H}$ has a canonical labeling called $\lambda$. From this point forward, unless otherwise noted, in such a situation we will let $(v,j)$ be the vertex in $L(v)$ that $\lambda$ maps to $j$ for each $v \in V(G)$ and $j \in [k]$. Notice that if $\mathcal{I}$ is the set of $\mathcal{H}$-colorings of $G$ and $\mathcal{C}$ is the set of proper $k$-colorings of $G$, the function $f: \mathcal{C} \rightarrow \mathcal{I}$ given by $f(c) = \{(v,c(v)) : v \in V(G)\}$ is a bijection.

The notion of chromatic polynomial was extended to DP-coloring in~\cite{KM19}.  Suppose $\mathcal{H} = (L,H)$ is a cover of graph $G$.  Let $P_{DP}(G, \mathcal{H})$ be the number of $\mathcal{H}$-colorings of $G$.  Then, the \emph{DP color function of $G$}, denoted by $P_{DP}(G,k)$, is the minimum value of $P_{DP}(G, \mathcal{H})$ where the minimum is taken over all possible $k$-fold covers $\mathcal{H}$ of $G$.  It is easy to see that for any graph $G$ and $k \in \N$, $P_{DP}(G, k) \leq P_\ell(G,k) \leq P(G,k)$.

\subsection{Criticality and Cartesian Products} 

Criticality is a notion of fundamental importance in extremal graph theory that is used to study a wide variety of graph properties. For $k\geq 2$, a \emph{$k$-critical graph} is a graph whose chromatic number is $k$ but whose proper subgraphs have chromatic number strictly less than $k$.  For convenience, we regard $K_1$ as the unique $1$-critical graph. We will also refer to a $k$-critical graph as a \emph{critical graph}. In 1951, Dirac~\cite{D51} initiated the study of critical graphs and since then this notion has been widely investigated.

The \emph{Cartesian product} of graphs $G$ and $H$, denoted by $G \square H$, is the graph with vertex set $V(G) \times V(H)$ and edges created so that $(u,v)$ is adjacent to $(u',v')$ if and only if either $u=u'$ and $vv' \in E(H)$ or $v=v'$ and $uu' \in E(G)$. Also, it is well-known that $\chi(G \square H) = \max\{\chi(G), \chi(H)\}$.  The following result concerning the DP-chromatic number of Cartesian products was proved in~\cite{KMG21}.
\begin{thm} [\cite{KMG21}] \label{thm: cartprod}
For any graphs $G$ and $H$, $\chi_{DP}(G\square H) \leq \min\{\chi_{DP}(G) + \col(H),\chi_{DP}(H) + \col(G)\} - 1.$
\end{thm}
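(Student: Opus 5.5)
By symmetry of the Cartesian product ($G \square H \cong H \square G$), it suffices to prove $\chi_{DP}(G \square H) \leq \chi_{DP}(G) + \col(H) - 1$; the other bound then follows by exchanging the roles of $G$ and $H$. Let $k = \chi_{DP}(G)$ and $d = \col(H)$. Fix an ordering $v_1, \ldots, v_n$ of $V(H)$ in which each $v_i$ has at most $d-1$ neighbors among $v_1, \ldots, v_{i-1}$. Let $\mathcal{H} = (L, H')$ be an arbitrary $(k+d-1)$-fold cover of $G \square H$. We must exhibit an $\mathcal{H}$-coloring, and we build it one copy of $G$ at a time. For each $i$, the copy $G_i$ is the subgraph of $G \square H$ induced by $V(G) \times \{v_i\}$, and it is isomorphic to $G$. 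We color $G_1, G_2, \ldots, G_n$ in this order.

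Suppose $G_1, \ldots, G_{i-1}$ have already been colored, with chosen colors $c(u,v_j) \in L((u,v_j))$, such that the chosen colors form an independent set in $H'$. Fix $u \in V(G)$. The neighbors of $(u,v_i)$ in $G \square H$ outside $G_i$ are exactly the vertices $(u,v_j)$ with $v_jv_i \in E(H)$. Among the already-colored copies, there are at most $d-1$ of them. Because $E_{H'}(L((u,v_j)), L((u,v_i)))$ is a matching, each chosen color $c(u,v_j)$ is adjacent to at most one color in $L((u,v_i))$. Deleting all such adjacent colors therefore leaves a set $L'((u,v_i)) \subseteq L((u,v_i))$ with $|L'((u,v_i))| \geq (k+d-1)-(d-1) = k$. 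Choose any $k$-subset $L''((u,v_i))$ of it. Let $\mathcal{H}_i$ be the pair consisting of $L''$ restricted to $V(G_i)$ and the subgraph of $H'$ induced by $\bigcup_{u} L''((u,v_i))$. Then $\mathcal{H}_i$ is a $k$-fold cover of $G_i \cong G$: the lists are disjoint independent sets, and between adjacent vertices the edges form a matching, since the matchings of $\mathcal{H}$ restrict to matchings (which need not be perfect). Since $\chi_{DP}(G) = k$, the copy $G_i$ has an $\mathcal{H}_i$-coloring, and we take it as the coloring of $G_i$.

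It remains to check that the union $I$ of all chosen colors is an $\mathcal{H}$-coloring. $I$ meets each list exactly once. Consider any edge of $G \square H$. If it lies inside some $G_i$, its endpoints' colors are nonadjacent because the coloring of $G_i$ is an independent transversal of $\mathcal{H}_i$. Otherwise it is of the form $(u,v_j)(u,v_i)$ with $j < i$. In that case the color of $(u,v_i)$ was chosen from $L'((u,v_i))$, which by construction avoids all neighbors of $c(u,v_j)$. Hence $I$ is independent in $H'$, and $G \square H$ is $\mathcal{H}$-colorable for every $(k+d-1)$-fold cover $\mathcal{H}$, as required.

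The only delicate point is that the sub-cover $\mathcal{H}_i$ must be a legitimate $k$-fold cover, so that the definition of $\chi_{DP}(G)$ applies to it. This holds because covers in this paper permit non-perfect matchings: truncating the lists to $k$-subsets keeps every edge set a matching. The counting bound (at most $d-1$ forbidden colors per vertex) is routine once the degeneracy ordering from $\col(H)$ is fixed.
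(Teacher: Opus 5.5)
Your proof is correct. The paper gives no proof of its own and cites this result from \cite{KMG21}; your argument is the standard one for this bound: color the copies of $G$ in an order of $V(H)$ witnessing $\col(H)$, so each vertex loses at most $\col(H)-1$ colors, and the surviving $\chi_{DP}(G)$-subsets form a legitimate (possibly non-full) $\chi_{DP}(G)$-fold cover of the current copy.
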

For any graph $G$, Theorem~\ref{thm: cartprod} implies $\chi_{DP}(G\square K_{l,t})\leq \chi_{DP}(G)+l.$ It is natural to ask how large $t$ must be for equality to hold. Towards this, we have the following result.
\begin{thm} [\cite{KMG21}] \label{thm: cartprodcompbipartite}
For any graph $G$, $\chi_{DP}(G\square K_{l,t}) = \chi_{DP}(G) + l$ whenever $t \geq (P_{DP}(G,\chi_{DP}(G)+l-1))^{l}.$
\end{thm}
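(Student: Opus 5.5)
By Theorem~\ref{thm: cartprod} we already have $\chi_{DP}(G\square K_{l,t}) \le \chi_{DP}(G)+l$, because $\col(K_{l,t}) \le l+1$. So it suffices to exhibit, for $t \ge (P_{DP}(G,k+l-1))^l$ with $k=\chi_{DP}(G)$, a bad $(k+l-1)$-fold cover of $G\square K_{l,t}$. Write $m=k+l-1$ and $N=P_{DP}(G,m)$. Let $A=\{a_1,\dots,a_l\}$ be the small side of $K_{l,t}$ and $B=\{b_1,\dots,b_t\}$ the large side. The graph $G\square K_{l,t}$ contains copies $G_{a_i}$ and $G_{b_j}$ of $G$, and each vertex $(v,a_i)$ is joined to each $(v,b_j)$.

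On each copy $G_{a_i}$ I place an $m$-fold cover $\mathcal{H}_i$ of $G$ attaining the minimum, so $G_{a_i}$ has exactly $N$ colorings. If $N=0$ we are done, so assume $N\ge 1$. Since $t\ge N^l$, the $B$-copies can be indexed injectively by the tuples $(I_1,\dots,I_l)$, where $I_i$ ranges over the $\mathcal{H}_i$-colorings of $G_{a_i}$. Any unused $B$-copies are given arbitrary $m$-fold covers with no extra edges.

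Now fix the $B$-copy indexed by $(I_1,\dots,I_l)$. For each $i$, the set $I_i$ picks out one color $c_i(v)\in L((v,a_i))$ for every $v\in V(G)$. The aim is to design the $m$-fold cover of this copy $G_{b}$, together with the matchings between $L((v,a_i))$ and $L((v,b))$, so that $G_b$ has no coloring avoiding all the chosen $c_i(v)$. The natural choice is to reserve $l$ distinct colors $x_1(v),\dots,x_l(v)$ in $L((v,b))$ and match $x_i(v)$ to $c_i(v)$, using only that single edge in each matching. This is a legal matching, and the cross edges between $(v,a_i)$ and $(v,b)$ are allowed. The remaining $m-l=k-1$ colors of each $L((v,b))$ then carry a bad $(k-1)$-fold cover of $G$. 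Such a cover exists because $\chi_{DP}(G)=k$, and its internal edges are placed only among these $k-1$ colors at each vertex.

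Any coloring of the whole product restricts to some $(I_1,\dots,I_l)$ on the $A$-copies. On the corresponding $B$-copy, each vertex $(v,b)$ cannot use any $x_i(v)$, since $x_i(v)$ is adjacent to the chosen color $c_i(v)$. So that copy must be colored from the bad $(k-1)$-fold cover, which is impossible. Hence the cover is bad, and $\chi_{DP}(G\square K_{l,t})>m$.

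The main thing to verify is that the cover axioms hold. The sets $L$ must be disjoint independent sets. The edges inside $G_b$ must form matchings supported only on the $(k-1)$-part, which is fine since the matching need not be perfect. The cross edges must form matchings, which they do because each $x_i(v)$ is used once and the $c_i(v)$ lie in different lists $L((v,a_i))$.

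The delicate bookkeeping is the case where two $A$-copies force the same vertex. This causes no problem, because the $x_i(v)$ are distinct for distinct $i$ and $l\le m$. The edge case $k=1$, where $G$ is edgeless and the bad $0$-fold cover is trivially bad, also needs a check, but it is routine.
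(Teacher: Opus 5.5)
Your proof is correct. It is the volatile-coloring construction behind Lemma~\ref{lem: nohcoloring}: one $B$-copy for each of the $N^l$ colorings of the $A$-copies, the $l$ chosen colors at each vertex blocked by single-edge matchings, and a bad $(k-1)$-fold cover on the remaining $k-1$ colors. The paper only cites this result from~\cite{KMG21}, but its proof of Theorem~\ref{thm: new} uses the same mechanism. There it groups the colorings into blocks of size $(k+l-1)^l$, each handled by $|C|$ copies via a covering family; that needs canonical covers and hence $\chi(G)=\chi_{DP}(G)$, whereas your one-copy-per-coloring version uses an arbitrary minimizing cover and an arbitrary bad $(k-1)$-fold cover, so it works for every $G$.
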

The following question is now natural.
\begin{ques} [\cite{KMG21}] \label{ques: 3}
Given a graph $G$ and $l \in \N$, let $f_{DP}(G,l)$ be the function satisfying: $\chi_{DP}(G\square K_{l,t})=\chi_{DP}(G)+l$
if and only if $t \geq f_{DP}(G,l)$. What is $f_{DP}(G,l)$?
\end{ques}

Since $\chi_{DP}(G\square K_{l,0}) = \chi_{DP}(G) < \chi_{DP}(G) + l$, we have $f_{DP}(G,l)\geq 1$. Moreover, by Theorem~\ref{thm: cartprodcompbipartite}, $
f_{DP}(G,l) \leq (P_{DP}(G,\chi_{DP}(G)+l-1))^{l}.$  Also, since $G\square K_{l,t}$ is a subgraph of $G\square K_{l,t+1}$, $\chi_{DP}(G\square K_{l,t})$ is nondecreasing in $t$. Hence $f_{DP}(G,l)$ exists for every graph $G$ and $l \in \N$. Also, note that $f_{DP}(K_1,l)=\mu(l)$. 

In \cite{KMS25}, upper bounds on $f_{DP}(G,l)$ were studied for $k$-critical graphs $G$ satisfying $\chi_{DP}(G)=k$. In particular, if $G$ is a $k$-critical graph such that $\chi_{DP}(G)=k$, then $f_{DP}(G,1)\leq P(G,k)/k$. For $l\geq 2$, the following result was proved in \cite{KMS25} with a straightforward probabilistic argument.
\begin{thm} [\cite{KMS25}] \label{thm: old}
For $k \geq 2$, let $G$ be a $k$-critical graph such that $\chi_{DP}(G) = k$. For $l \geq 2$, let
\[
c_{k,l}
=
\left\lceil
\displaystyle\frac{l\log(k+l-1)}
{\log((k-1)!)+l\log(k+l-1)-\log((k-1)!(k+l-1)^{l}-(k+l-1)!)}
\right\rceil.
\]
Then, $\chi_{DP}(G \square K_{l,t}) = k+l$ whenever
\[
t \geq c_{k,l}
\left(
\displaystyle\frac{P(G,k+l-1)}{k+l-1}
\right)^l.
\]
\end{thm}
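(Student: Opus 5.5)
The plan is to exhibit, for every $t \geq c_{k,l}\,(P(G,k+l-1)/(k+l-1))^l$, a bad $(k+l-1)$-fold cover of $G \square K_{l,t}$; the matching upper bound $\chi_{DP}(G\square K_{l,t}) \leq k+l$ is given by Theorem~\ref{thm: cartprod}. Write $m = k+l-1$. Let the parts of $K_{l,t}$ be $A=\{a_1,\dots,a_l\}$ and $B=\{b_1,\dots,b_t\}$, and let $G_x$ denote the copy of $G$ indexed by $x \in A\cup B$. Inside every copy $G_x$ we put the canonical $m$-fold cover, with colors $(v,x,c)$ for $c\in\Z_m$. Then the $\mathcal{H}$-colorings restricted to $G_{a_i}$ are exactly the proper $m$-colorings $f_i$ of $G$.

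\textbf{Cross matchings.} For each $b_j$ and each $i \in [l]$ we fix a proper $m$-coloring $g_{j,i}$ of $G$. For every $v\in V(G)$ we join $(v,a_i,c)$ to $(v,b_j,c-g_{j,i}(v))$, with arithmetic in $\Z_m$; this is a perfect matching.

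\textbf{Key observation.} Suppose that for each $i$ we have $f_i = g_{j,i}+s_i$ for constants $s_i\in\Z_m$, and that $s_1,\dots,s_l$ are distinct. Then every vertex $(v,b_j)$ has exactly the same forbidden color set $S=\{s_1,\dots,s_l\}$. Since $G_{b_j}$ carries the canonical cover, an $\mathcal{H}$-coloring of $G_{b_j}$ avoiding $S$ would be a proper coloring of $G$ from $m-l=k-1$ colors, contradicting $\chi(G)=k$. So it suffices to choose the $g_{j,i}$ so that every tuple $(f_1,\dots,f_l)$ of proper $m$-colorings triggers this event for some $j$.

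\textbf{Counting.} The group $\Z_m$ acts on proper $m$-colorings by shifting all colors. No nonzero shift fixes a coloring, so the colorings split into exactly $P(G,m)/m$ orbits of size $m$. We list all $(P(G,m)/m)^l$ tuples of orbits $(O_1,\dots,O_l)$ and assign each tuple $c_{k,l}$ indices $j$; this uses at most the stated number of $b_j$'s, and superfluous $b_j$'s can be handled arbitrarily since adding vertices preserves badness. For each such $j$ we choose $g_{j,i}$ independently and uniformly at random from $O_i$.

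Now fix a tuple $(f_1,\dots,f_l)$ with $f_i\in O_i$. For each of the $c_{k,l}$ indices $j$ assigned to $(O_1,\dots,O_l)$, the shifts $s_i$ with $f_i=g_{j,i}+s_i$ are independent and uniform on $\Z_m$. Hence they are distinct with probability
\[
p=\frac{m!}{(k-1)!\,m^l}.
\]
The probability that none of these $j$ works is therefore $(1-p)^{c_{k,l}}$. Only $m^l$ tuples $(f_1,\dots,f_l)$ lie over a given orbit tuple, so a union bound within each orbit tuple shows the construction fails with probability at most $m^l(1-p)^{c_{k,l}}$ there. This is less than $1$ exactly when
\[
c_{k,l} > \frac{l\log m}{-\log(1-p)}.
\]
Writing $-\log(1-p) = \log((k-1)!) + l\log m - \log((k-1)!m^l - m!)$ recovers the definition of $c_{k,l}$. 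The orbit tuples use disjoint sets of indices, so the random choices can be fixed separately for each orbit tuple, and some choice works everywhere.

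The step I expect to need the most care is getting the cross-matching design right. Cyclic shifts must translate into a forbidden set that is the same at every vertex of $G_{b_j}$, which is what lets the canonical inner cover together with $\chi(G)=k$ finish the argument. Once that is set up, the probabilistic count is routine. The $k$-critical hypothesis and $\chi_{DP}(G)=k$ are needed only to identify $\chi_{DP}(G)+l$ with $k+l$, and thereby to match the upper bound from Theorem~\ref{thm: cartprod}.
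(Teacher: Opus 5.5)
Your construction is essentially the one in the proof of Theorem~\ref{thm: new}: canonical covers on every fiber, shift classes of proper $m$-colorings, and $c_{k,l}$ copies of $G$ for each tuple of classes. The covering family is chosen at random among $l$-tuples of translations of $\Z_m$. Translations give the same success probability $p=p_{k,l}$ as the uniformly random permutations in Lemma~\ref{lem: letzgoupper}.

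\textbf{The gap is in the last step.} $c_{k,l}$ is defined by a ceiling, so you only know $c_{k,l}\ge l\log m/\lambda$ with $\lambda=-\log(1-p)$. Your union bound needs the strict inequality. This actually fails: for $l=2$ you have $p=(m-1)/m$, so $1-p=1/m$ and the quantity inside the ceiling is exactly $2$ for every $k$. Hence $c_{k,2}=2$ and $m^2(1-p)^{2}=1$, and the union bound gives no information. As written, the case $l=2$ of the statement is not proved. For $l\ge 3$ you would also have to rule out the ratio being an integer.

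\textbf{Two easy repairs.}

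\emph{Handle $l=2$ directly.} Fix base colorings $h_i\in O_i$ and write $g_{j,i}=h_i+r_{j,i}$ and $f_i=h_i+x_i$. Then index $j$ works for $(f_1,f_2)$ if and only if $x_1-x_2\neq r_{j,1}-r_{j,2}$. So two indices with $r_{1,1}-r_{1,2}\neq r_{2,1}-r_{2,2}$ suffice; this is the covering family $\{(\mathrm{id},\mathrm{id}),(\mathrm{id},\rho)\}$ noted right after the proof of Theorem~\ref{thm: new}.

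\emph{Sharpen the union bound (works uniformly for all $l\ge 2$).} Let $Z$ be the number of uncovered tuples over a fixed class tuple. Then $\Pr(Z\ge 1)\le \mathbb{E}[Z]-\Pr(Z\ge 2)$. Moreover $\Pr(Z\ge 2)>0$: with probability $m^{-l(c_{k,l}-1)}>0$ all $c_{k,l}$ random shift vectors coincide. In that case every tuple whose shifted vector has a repeated coordinate fails, and there are $m^l-m!/(k-1)!\ge 2$ of them. Hence $\Pr(Z\ge1)<\mathbb{E}[Z]=m^l(1-p)^{c_{k,l}}\le 1$ whenever $c_{k,l}\ge l\log m/\lambda$, which the ceiling guarantees.
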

Theorem~\ref{thm: old} gives a sufficient condition for $\chi_{DP}(G \square K_{l,t})=k+l$, rather than an if and only if condition. Consequently, one of the focuses of this paper is on obtaining improvements to this sufficient condition.  In particular, we will obtain improvements on the bound in Theorem~\ref{thm: old} for various values of $k$ and $l$ with $l\geq 3$. We also obtain improved general upper and lower bounds on $\mu(l)$, improving the bounds in Theorem~\ref{thm: mudrock}. For each fixed $l\geq 3$, the quantity inside the ceiling in the definition of $c_{k,l}$ approaches $l$ from above as $k\to\infty$. Consequently, $c_{k,l}=l+1$ for all sufficiently large $k$.

\subsection{Outline of Paper and Results} 

We now present an outline of the paper.  In Section~\ref{cover} we introduce the notion of \emph{covering family}.  Specifically, suppose $l \in \N$ and $d$ is a nonnegative integer.  Let $S_{[l+d]}$ denote the symmetric group on $[l+d]$.  Suppose $C \subseteq (S_{[l+d]})^l$.  We say that $C$ is a \emph{covering family of $[l+d]^l$} if for each $(x_1,\dots,x_l) \in [l+d]^l$ there exists a $(\sigma_1, \dots, \sigma_l) \in C$ such that $\sigma_1(x_1), \dots, \sigma_l(x_l)$ are pairwise distinct.  We then use this notion to prove a result in the form of Theorem~\ref{thm: old}.  
\begin{thm} \label{thm: new}
For $k \in \N$, let $G$ be a graph such that $\chi(G) = \chi_{DP}(G) = k$. For $l \geq 2$, suppose $C$ is a covering family of $[l+k-1]^l$ and $c = |C|$. Then, $\chi_{DP}(G \square K_{l,t}) = k+l$ whenever $t \geq c \left(\frac{P(G,k+l-1)}{k+l-1}\right)^l$. 
\end{thm}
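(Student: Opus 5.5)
The plan is to prove the two inequalities separately. Set $m=k+l-1$. The upper bound $\chi_{DP}(G \square K_{l,t}) \le k+l$ comes from Theorem~\ref{thm: cartprod}: $\col(K_{l,t}) = \min\{l,t\}+1 \le l+1$ and $\chi_{DP}(G)=k$. For the lower bound we construct a bad $m$-fold cover of $G \square K_{l,t}$ once $t \ge c\,(P(G,m)/m)^l$. Since $\chi_{DP}$ is monotone under taking subgraphs, it suffices to do this for $t = c\,(P(G,m)/m)^l$; any further copies can be given arbitrary covers.

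\textbf{Setup.} Let $A=\{a_1,\dots,a_l\}$ and $B$ be the parts of $K_{l,t}$, and write $G_u$ for the copy $G\times\{u\}$. On every copy $G_u$ we use the canonical $m$-fold cover, with colors $(v,u,j)$ for $j\in[m]$. Identify $[m]$ with $\Z_m$, and let $\Z_m$ act on proper $m$-colorings of $G$ by $(\tau\cdot f)(v)=f(v)+\tau$. This action is free, so every orbit has size exactly $m$. Choose a set $R$ of orbit representatives; then $|R| = P(G,m)/m$, and every proper $m$-coloring $f$ is uniquely $\tau\cdot h$ with $h\in R$ and $\tau\in\Z_m$. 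Index $B$ by tuples $(h_1,\dots,h_l,\sigma)$ with $h_i\in R$ and $\sigma=(\sigma_1,\dots,\sigma_l)\in C$. This uses exactly $c\,(P(G,m)/m)^l$ vertices of $B$.

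\textbf{Cross edges.} For $b=(h_1,\dots,h_l,\sigma)$ and a vertex $v$ of $G$, match color $(v,a_i,x)$ to $(v,b,\sigma_i(x-h_i(v)))$. For fixed $v$, $a_i$ and $b$, the map $x\mapsto \sigma_i(x-h_i(v))$ is a bijection of $[m]$, so this is a perfect matching and we have a legitimate (full) $m$-fold cover.

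\textbf{Why the cover is bad.} Suppose an $\mathcal H$-coloring exists. Its restriction to $G_{a_i}$ is a proper $m$-coloring $f_i$, and we write $f_i=\tau_i\cdot h_i$ with $h_i\in R$. Then $f_i(v)-h_i(v)=\tau_i$ for every $v$, so the color forbidden at $(v,b)$ by $a_i$ is $\sigma_i(\tau_i)$, independent of $v$. This independence is the key step, and it is exactly why we quotient by the free cyclic action.

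Because $C$ is a covering family of $[m]^l$, some $\sigma\in C$ makes $\sigma_1(\tau_1),\dots,\sigma_l(\tau_l)$ pairwise distinct. Take $b=(h_1,\dots,h_l,\sigma)$. At every vertex of $G_b$ the same $l$ colors are forbidden, leaving a common set of $m-l=k-1$ labels. Since the cover on $G_b$ is canonical, the coloring restricted to $G_b$ would be a proper coloring of $G$ using only these $k-1$ labels. This contradicts $\chi(G)=k$, so the cover is bad and $\chi_{DP}(G\square K_{l,t})\ge k+l$.

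The only delicate point is arranging that the forbidden colors do not depend on $v$, which the orbit decomposition handles. The rest is checking that each cross matching is a bijection.
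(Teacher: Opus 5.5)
Your proposal is correct and is essentially the paper's proof. Both use canonical covers on every copy of $G$ and the partition of proper $(k+l-1)$-colorings into cyclic-shift classes of size $k+l-1$. Both assign one block of $c$ vertices of $Y$ to each $l$-tuple of classes. Both build the cross matchings from $C$ so that the $l$ forbidden labels are the same at every vertex of the chosen copy, leaving only $k-1$ labels.

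The differences are cosmetic. You index each class explicitly by its shift $\tau$. You argue directly from a hypothetical $\mathcal{H}$-coloring instead of citing the volatile-coloring lemma. You use subgraph monotonicity for larger $t$ where the paper leaves the extra matchings empty.
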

Notice that Theorem~\ref{thm: new} does not require $G$ to be critical. In particular, since every $k$-critical graph has chromatic number $k$, Theorem~\ref{thm: new} applies to every graph satisfying the hypotheses of Theorem~\ref{thm: old}.  We also notice that Theorem~\ref{thm: new} gives us a way to study the upper bound in Theorem~\ref{thm: mudrock} and, when $l\geq 3$, the bound in Theorem~\ref{thm: old} using covering families. When it comes to Theorem~\ref{thm: mudrock}, the situation is particularly nice.  For $l\in\N$, let $\kappa(l)$ denote the minimum size of a covering family of $[l]^l$. As a consequence of Theorem~\ref{thm: new}, together with a converse argument, we obtain the following.
\begin{thm} \label{thm: mucover}
For each $l\in\N$, $\mu(l)=\kappa(l)$.
\end{thm}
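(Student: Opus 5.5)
We prove the two inequalities $\mu(l)\le\kappa(l)$ and $\mu(l)\ge\kappa(l)$ separately.

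\emph{Upper bound.} Apply Theorem~\ref{thm: new} with $G=K_1$, so $k=1$ and $\chi(G)=\chi_{DP}(G)=1$. Then $P(K_1,l)/l=1$, and for $l\ge 2$ the theorem gives $\chi_{DP}(K_{l,t})=1+l$ whenever $t\ge \kappa(l)$, taking $C$ to be a minimum covering family of $[l]^l$. This uses $K_1\square K_{l,t}\cong K_{l,t}$. Hence $\mu(l)\le\kappa(l)$. The case $l=1$ is checked directly: $\mu(1)=1$, and $\kappa(1)=1$ since a single permutation covers $[1]^1$.

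\emph{Lower bound.} We show that if $t<\kappa(l)$ then $K_{l,t}$ has an $l$-fold cover with an $\mathcal{H}$-coloring. Let the bipartition be $A=\{a_1,\dots,a_l\}$ and $B=\{b_1,\dots,b_t\}$. For an arbitrary $l$-fold cover, write $L(a_i)=\{(a_i,1),\dots,(a_i,l)\}$ and $L(b_j)=\{(b_j,1),\dots,(b_j,l)\}$. First reduce to full covers by adding edges: this only removes colorings, so it suffices to show that every full $l$-fold cover is good when $t<\kappa(l)$. In a full cover, for each $i,j$ the perfect matching between $L(a_i)$ and $L(b_j)$ is a permutation $\sigma_{j,i}\in S_{[l]}$ with $(a_i,x)\sim(b_j,\sigma_{j,i}(x))$. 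Associate to $b_j$ the tuple $\tau_j=(\sigma_{j,1},\dots,\sigma_{j,l})\in(S_{[l]})^l$.

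Choosing colors $(a_i,x_i)$ on $A$ extends to a coloring exactly when each $b_j$ has a color not matched to any chosen color. The forbidden set at $b_j$ is $\{\sigma_{j,1}(x_1),\dots,\sigma_{j,l}(x_l)\}$, which blocks all $l$ colors precisely when these values are pairwise distinct. So the cover is bad iff every $(x_1,\dots,x_l)\in[l]^l$ is made rainbow by some $\tau_j$, i.e.\ iff $\{\tau_1,\dots,\tau_t\}$ is a covering family of $[l]^l$. Since this family has at most $t<\kappa(l)$ members, it is not covering, so some $(x_i)$ extends and the cover is good. Thus $\chi_{DP}(K_{l,t})\le l$, giving $\mu(l)\ge\kappa(l)$.

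\emph{Main obstacle.} The delicate step is the reduction to full covers, together with the matching between "badness" and the quantifiers in the covering-family definition. Non-full matchings have fewer edges, and we complete each partial matching arbitrarily to a perfect one. Adding edges to $H$ can only destroy independent transversals, so if the completed full cover is good, the original is good as well. Everything else is direct translation, so the argument should be short once Theorem~\ref{thm: new} is available.
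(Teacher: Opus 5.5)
Your proposal is correct and follows essentially the same route as the paper. The upper bound comes from Theorem~\ref{thm: new} applied with $G=K_1$. Your lower bound is the contrapositive of the paper's argument: the paper takes a bad $l$-fold cover of $K_{l,\mu(l)}$, completes its matchings to perfect ones, and reads off a covering family of $[l]^l$ from the resulting permutations.
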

Thus, determining $\mu(l)$ is equivalent to determining the minimum size of a covering family of $[l]^l$.  This means that the minimum size of a covering family of $[l]^l$ is 1, 2, and 6 when $l$ is 1, 2, and 3 respectively. In Section~\ref{Gunjan}, we use Theorem~\ref{thm: mucover} to determine $\mu(4)$.  Note that Theorem~\ref{thm: mudrock} gives $11 \leq \mu(4) \leq 45$. We construct a covering family of $[4]^4$ of size 12 and then use integer programming to show that no covering family of $[4]^4$ of size at most 11 exists. Consequently,
$\mu(4)=12$ (see Proposition~\ref{pro: 4upper} below). We also use covering families to improve the lower bound in Theorem~\ref{thm: mudrock}.
\begin{thm} \label{thm: uptheoddlow}
For each $l \in \N$ with $l \geq 3$ and $s \in [l-1]$,
\[
\mu(l) = \kappa(l) \geq
\ceil{\frac{s! \, l^{l-s}}{l!} \ceil{\frac{l^s}{s!}}}.
\]
Consequently,
\[
\mu(l) = \kappa(l) \geq
\max_{s \in [l-1]}
\ceil{\frac{s! \, l^{l-s}}{l!} \ceil{\frac{l^s}{s!}}}.
\]
\end{thm}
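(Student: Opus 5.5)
By \cref{thm: mucover} we have $\mu(l)=\kappa(l)$, so it suffices to show that every covering family $C$ of $[l]^l$ satisfies $|C| \geq \frac{s!\,l^{l-s}}{l!}\ceil{\frac{l^s}{s!}}$. Since $|C|$ is an integer, the outer ceiling then follows, and maximizing over $s \in [l-1]$ gives the second statement. The plan is a two-level double counting that refines the trivial count: each $(\sigma_1,\dots,\sigma_l)$ covers exactly $l!$ points of $[l]^l$, which gives Mudrock's bound $\ceil{l^l/l!}$. The gain comes from rounding up an intermediate quantity.

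Split each $x \in [l]^l$ as $x=(u,y)$ with $u=(x_1,\dots,x_s)\in[l]^s$ and $y=(x_{s+1},\dots,x_l)\in[l]^{l-s}$. Call $\sigma=(\sigma_1,\dots,\sigma_l)\in C$ \emph{active on} $y$ if $\sigma_{s+1}(x_{s+1}),\dots,\sigma_l(x_l)$ are pairwise distinct. Only an active member can cover a point $(u,y)$.

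\emph{Step 1 (fixed tail).} Fix $y$ and an active $\sigma$, and let $R$ be the $s$-element set $[l]\setminus\{\sigma_i(x_i): i>s\}$. Then $\sigma$ covers $(u,y)$ exactly when $(\sigma_1(x_1),\dots,\sigma_s(x_s))$ is a bijection $[s]\to R$. Since each $\sigma_i$ is a permutation, this happens for exactly $s!$ choices of $u$. All $l^s$ points with tail $y$ must be covered, so the number $a(y)$ of members of $C$ active on $y$ satisfies $a(y)\,s!\geq l^s$. As $a(y)$ is an integer, $a(y)\ge \ceil{l^s/s!}$.

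\emph{Step 2 (fixed member).} For a fixed $\sigma$, the tails $y$ on which it is active correspond bijectively, via $y_i\mapsto\sigma_i(y_i)$, to injective sequences of length $l-s$ in $[l]$. There are $l!/s!$ of these.

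\emph{Step 3 (combine).} Count the pairs $(\sigma,y)$ with $\sigma$ active on $y$ in two ways:
\[
|C|\cdot\frac{l!}{s!}=\sum_{y\in[l]^{l-s}} a(y)\;\geq\; l^{l-s}\ceil{\frac{l^s}{s!}}.
\]
Rearranging gives the bound.

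The main point requiring care is Step 1. One must check that the count of covered $u$ is exactly $s!$ for every active $\sigma$, which uses that the $\sigma_i$ are bijections. The integrality must also be applied to $a(y)$ before summing, since that is where the improvement over $\ceil{l^l/l!}$ arises. The rest is routine bookkeeping.
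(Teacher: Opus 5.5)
Your proposal is correct and is essentially the paper's proof. The paper double counts the pairs $(\pi,\boldsymbol{c})\in C\times[l]^{l-s}$ with $A_\pi\cap F_{\boldsymbol{c}}\neq\emptyset$, using that each fiber meets each $A_\pi$ in either $0$ or $s!$ points, so at least $\lceil l^s/s!\rceil$ members of $C$ are active on each tail, and that each $\pi$ is active on exactly $l!/s!$ tails (Lemma~\ref{lem: count1}); these are your Steps 1--3, with the rounding applied at the same place.
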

Taking $s=1$ recovers the lower bound in Theorem~\ref{thm: mudrock} while other choices of $s$ give improvements for various values of $l$. For example, when $l\geq 5$ is odd, taking $s=2$ gives a clear improvement.

Finally, in Section~\ref{Aparna} we begin by using Theorem~\ref{thm: new} to improve the bound given by Theorem~\ref{thm: old} when $l = 3$ and $2 \leq k \leq 76$.
\begin{thm} \label{thm: Aparna}
For $k \in \N$ and $k \geq 2$, let $G$ be a graph such that $\chi(G) = \chi_{DP}(G) = k$. Then, $\chi_{DP}(G \square K_{3,t}) = k+3$ whenever $t \geq 4 \left(\frac{P(G,k+2)}{k+2}\right)^3$. 
\end{thm}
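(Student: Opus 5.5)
The plan is to derive the statement directly from Theorem~\ref{thm: new} with $l=3$. For each $k\ge 2$ it then suffices to exhibit a covering family $C$ of $[k+2]^3$ with $|C|=4$. Write $n=k+2\ge 4$ and identify $[n]$ with the elements of an abelian group $\Gamma$ of order $n$. I will look for a covering family made of translations: each member is a triple $(\tau_0,\tau_{a_j},\tau_{b_j})$, $j\in[4]$, where $\tau_a(x)=x+a$. Translations are permutations of $\Gamma$, so these are legitimate elements of $(S_{[n]})^3$.

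Now fix $(x_1,x_2,x_3)\in\Gamma^3$ and set $u=x_2-x_1$ and $v=x_3-x_1$. Member $j$ fails to separate $(x_1,x_2,x_3)$ exactly when one of the following holds:
\[
u=-a_j,\qquad v=-b_j,\qquad v-u=a_j-b_j .
\]
Suppose the three sequences $(a_j)_{j\in[4]}$, $(b_j)_{j\in[4]}$ and $(a_j-b_j)_{j\in[4]}$ each consist of pairwise distinct elements of $\Gamma$. Then each of the three conditions can hold for at most one index $j$. Hence at most three of the four members fail, and some member works. So the whole problem reduces to finding four such pairs $(a_j,b_j)$, that is, a partial transversal of length $4$ in the Cayley table of $\Gamma$.

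The main (small) obstacle is producing these pairs for every $n\ge 4$. The cases are as follows.
\begin{itemize}
\item For $n\ge 5$, take $\Gamma=\Z_n$ and $a_j\in\{0,1,2,3\}$. If $n\ge 7$, set $b_j=2a_j$: the values $0,2,4,6$ are distinct, and the differences $a_j-b_j=-a_j$ are distinct.
\item For $n=5$, the same choice $b_j=2a_j$ works, since $0,2,4,1$ are distinct mod $5$.
\item For $n=6$, take $b=(0,2,5,1)$. The differences are then $0,5,3,2$, which are distinct.
\item For $n=4$, the cyclic group has no orthomorphism, so I switch to $\Gamma=\mathbb{F}_4$. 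Let $a_j$ range over all of $\mathbb{F}_4$ and set $b_j=\omega a_j$, where $\omega$ is a primitive element. Then $a_j-b_j=(1+\omega)a_j=\omega^2a_j$, and both $(b_j)$ and $(a_j-b_j)$ are distinct.
\end{itemize}

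With these choices we obtain a covering family of $[k+2]^3$ of size $4$ for every $k\ge2$. Theorem~\ref{thm: new} then yields $\chi_{DP}(G\square K_{3,t})=k+3$ for $t\ge 4\left(P(G,k+2)/(k+2)\right)^3$.
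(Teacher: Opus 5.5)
Your proof is correct and follows essentially the same route as the paper. The paper also deduces the result from Theorem~\ref{thm: new} with $l=3$ via a size-$4$ covering family of $[k+2]^3$ built from translations, and it uses the same pigeonhole argument: each of the three pairwise collision equations is satisfied by at most one member. The only difference is the choice of groups. The paper uses $\mathbb{F}_r\times[m]$ with $r\geq4$ a prime-power divisor of $q\neq6$, plus an ad hoc set in $\Z_6$. You instead use $\Z_n$ with $b_j=2a_j$ for $n\geq5$, $n\neq6$, an ad hoc choice for $n=6$, and $\mathbb{F}_4$ only for $n=4$.
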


We end Section~\ref{Aparna} by proving a result that gives an upper bound on the size of the smallest covering families of $[l+k-1]^l$.  The result gives some improvements on $c_{k,l}$ in Theorem~\ref{thm: old} when $l\geq 3$ and the upper bound on $\mu(l)$ in Theorem~\ref{thm: mudrock}. 
\begin{thm}\label{thm: letzgoupper}
Suppose $l \geq 2$, and let $\lambda_{1,l}=-\log(1-l!/l^l)$. Then,
\[
\mu(l)\leq \left\lceil \frac{\log(l^l\lambda_{1,l}/2)}{\lambda_{1,l}} \right\rceil+\left\lfloor \frac{1}{\lambda_{1,l}}+\frac{l}{2}\right\rfloor.
\]
Furthermore, suppose $k \geq 2$ and $G$ is a graph such that $\chi(G) = \chi_{DP}(G)=k$. Let
\[
p_{k,l}=\frac{(k+l-1)!}{(k-1)!(k+l-1)^l}
\;\text{ and }\;
\lambda_{k,l}=-\log(1-p_{k,l}).
\]
Then, $\chi_{DP}(G \square K_{l,t})=k+l$ whenever
\[
t\geq \left(\left\lceil \frac{\log((k+l-1)^l\lambda_{k,l}/2)}{\lambda_{k,l}} \right\rceil+\left\lfloor \frac{1}{\lambda_{k,l}}+\frac{1}{2}\right\rfloor\right)\left(\frac{P(G,k+l-1)}{k+l-1}\right)^l.
\]
\end{thm}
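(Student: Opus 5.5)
The plan is an alteration (random choice followed by deletion) argument on covering families, combined with Theorem~\ref{thm: new} and Theorem~\ref{thm: mucover}. Throughout, put $n=k+l-1$, so that $n=l$ in the first statement ($k=1$). By Theorem~\ref{thm: mucover}, the bound on $\mu(l)$ is exactly a bound on $\kappa(l)$. By Theorem~\ref{thm: new}, the second statement follows once we produce a covering family of $[n]^l$ whose size is the displayed bracketed quantity. So everything reduces to one upper bound on the minimum size of a covering family of $[n]^l$.

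\textbf{Step 1 (random part).} Pick $(\sigma_1,\dots,\sigma_l)$ uniformly from $(S_{[n]})^l$. For a fixed $x=(x_1,\dots,x_l)$, the values $\sigma_1(x_1),\dots,\sigma_l(x_l)$ are independent and uniform on $[n]$. Hence $x$ is covered with probability
\[
\frac{n(n-1)\cdots(n-l+1)}{n^l}=p_{k,l},
\]
which equals $l!/l^l$ when $k=1$. Choose $m$ such tuples independently. The expected number of uncovered $x\in[n]^l$ is then $n^l(1-p_{k,l})^m=n^l e^{-\lambda_{k,l} m}$. Fix an outcome $C_0$ with $|C_0|=m$ whose uncovered set $U$ satisfies $|U|\le n^l e^{-\lambda m}$.

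\textbf{Step 2 (repair part).} Add tuples to $C_0$ so as to cover $U$. Adding one tuple per vector of $U$ would give the weaker bound $m+n^le^{-\lambda m}$. The $/2$ inside the logarithm signals that each added tuple should cover \emph{two} uncovered vectors. So the key lemma I would prove is the following: for $k\ge 2$, any two vectors $x,y\in[n]^l$ are covered by a single tuple. The construction is to set $\sigma_i(x_i)=i$ for every $i$, and then choose the values $\sigma_i(y_i)$ for those $i$ with $y_i\ne x_i$ so that the full list of $y$-images is pairwise distinct. This requires a small matching/Hall argument, and the spare symbol $l+1\le n$ provides the room needed to avoid collisions (a cyclic shift alone can collide, so the spare symbol is used to break such collisions). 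Pairing up $U$ then costs $\lceil |U|/2\rceil\le |U|/2+1/2$ extra tuples.

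For $k=1$ there is no spare symbol, and two vectors may be impossible to cover together. In that case I would partition $U$ so that all but a bounded number of vectors can be paired, charging an additive loss of at most $l/2$. That loss accounts for the $l/2$ appearing in place of $1/2$. This pairing lemma is the step I expect to be the main obstacle, especially the $k=1$ case, and it is where I would spend effort first: characterize exactly which pairs $x,y\in[l]^l$ admit a common covering tuple.

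\textbf{Step 3 (optimization).} Take $m=\left\lceil \log(n^l\lambda/2)/\lambda\right\rceil$, which is the minimizer of $m+n^le^{-\lambda m}/2$ up to rounding. Then $n^le^{-\lambda m}/2\le 1/\lambda$. The total size is therefore at most $m+1/\lambda+1/2$ when $k\ge 2$, and at most $m+1/\lambda+l/2$ when $k=1$. Since the size is an integer, we may take floors, which gives exactly the stated expressions. Finally, invoke Theorem~\ref{thm: mucover} for the first statement and Theorem~\ref{thm: new} for the second.
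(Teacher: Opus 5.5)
\textbf{Gap: the $k=1$ case, i.e.\ the entire first assertion (the bound on $\mu(l)$), is not proved.}

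Your $k\ge 2$ argument and your Step~3 optimization agree with the paper's proof. For $k\ge2$ no Hall argument is needed: send $x_i\mapsto a_i$ with $a_1,\dots,a_l$ distinct. If at least two coordinates differ, cyclically shift the $a_i$ over those coordinates to get the images of $y$; this never collides. If exactly one coordinate differs, use the spare symbol $l+1$ there.

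For $k=1$ you infer an ``additive loss of at most $l/2$'' from the target formula, but you give no argument for it. An arbitrary partition of $U$ into pairs and singletons does not achieve it, since badly chosen pairs can leave many vectors unpaired. Two ingredients are missing.

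\emph{First, the characterization of which pairs can share a tuple.} Two distinct $x,y\in[l]^l$ have a common covering tuple if and only if they differ in at least two coordinates. Sufficiency is the cyclic-shift construction above. Necessity holds because both image tuples would have to be permutations of $[l]$ that agree in all but one position, which is impossible.

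\emph{Second, a bound on what cannot be paired.} Take a maximal matching $M$ in the graph on $U$ whose edges join vectors differing in at least two coordinates. The unsaturated vectors pairwise differ in exactly one coordinate, and this forces them onto a single line. Indeed, suppose $u,v$ differ only in coordinate $j$, and $w$ differed from $u$ only in some coordinate $i\neq j$. Then $w_j=u_j\neq v_j$ and $w_i\neq u_i=v_i$, so $w$ would differ from $v$ in two coordinates. Hence all unsaturated vectors agree outside a single coordinate $j$, so their number $t$ satisfies $t\le l$. Covering $U$ then costs $|M|+t=(|U|+t)/2$. This is an integer at most $(|U|+l)/2$, hence at most $\lfloor (|U|+l)/2\rfloor\le\lfloor 1/\lambda_{1,l}+l/2\rfloor$, using $|U|\le 2/\lambda_{1,l}$ from Step~1.

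Without this argument, or some equivalent bound of $l$ on the unpairable leftover, your Step~3 for $k=1$ rests on an unproved claim, and the bound on $\mu(l)$ does not follow.
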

The following tables illustrate the improvements given by the results in Section~\ref{Aparna}.  In the first table we compare the upper bound on $\mu(l)$ in Theorem~\ref{thm: mudrock} with the bound in Theorem~\ref{thm: letzgoupper}.  We also include the known exact values, the last of which follows from Proposition~\ref{pro: 4upper}.  The second table compares Theorems~\ref{thm: old}, \ref{thm: Aparna}, and \ref{thm: letzgoupper} when $l=3$.  The third table gives some additional comparisons between Theorems~\ref{thm: old} and \ref{thm: letzgoupper}.  There is no restriction relating $k$ and $l$ in Theorem~\ref{thm: letzgoupper}, so we include several examples with $k>l$ in the third table.  The improvements from Theorem~\ref{thm: letzgoupper} tend to be strongest when $k$ is small compared to $l$.  For each fixed $l\geq3$, the coefficient $c_{k,l}$ in Theorem~\ref{thm: old} and the corresponding coefficient in Theorem~\ref{thm: letzgoupper} are both equal to $l+1$ for all sufficiently large $k$.  In some intermediate ranges, Theorem~\ref{thm: old} can be slightly better.

\begin{center}
\begin{minipage}[t]{0.36\textwidth}
\centering
\scriptsize
\setlength{\tabcolsep}{2pt}
\begin{tabular}{c|c|c|c}
\hline
$l$ & Thm.~\ref{thm: mudrock} & Thm.~\ref{thm: letzgoupper} & Exact $\mu(l)$\\
\hline
$2$ & $4$ & $3$ & $2$\\
$3$ & $13$ & $10$ & $6$\\
$4$ & $45$ & $38$ & $12$\\
$5$ & $151$ & $134$ & ? \\
$6$ & $492$ & $446$ & ? \\
$7$ & $1557$ & $1443$ & ? \\
$8$ & $4829$ & $4539$ & ? \\
$9$ & $14736$ & $13993$ & ? \\
$10$ & $44380$ & $42467$ & ? \\
\hline
\end{tabular}
\end{minipage}
\hfill
\begin{minipage}[t]{0.39\textwidth}
\centering
\scriptsize
\setlength{\tabcolsep}{2pt}
\begin{tabular}{c|c|c|c}
\hline
$k$ & $c_{k,3}$ & Thm.~\ref{thm: Aparna} & Thm.~\ref{thm: letzgoupper}\\
\hline
$2$ & $9$ & $4$ & $8$\\
$3$ & $8$ & $4$ & $8$\\
$4\leq k\leq5$ & $7$ & $4$ & $7$\\
$6\leq k\leq9$ & $6$ & $4$ & $7$\\
$10\leq k\leq11$ & $6$ & $4$ & $6$\\
$12\leq k\leq19$ & $5$ & $4$ & $6$\\
$20\leq k\leq76$ & $5$ & $4$ & $5$\\
$77\leq k\leq156$ & $4$ & $4$ & $5$\\
$k\geq157$ & $4$ & $4$ & $4$\\
\hline
\end{tabular}
\end{minipage}
\hfill
\begin{minipage}[t]{0.23\textwidth}
\centering
\scriptsize
\setlength{\tabcolsep}{2pt}
\begin{tabular}{c|c|c}
\hline
$(k,l)$ & $c_{k,l}$ & Thm.~\ref{thm: letzgoupper}\\
\hline
$(2,4)$ & $31$ & $25$\\
$(6,4)$ & $15$ & $15$\\
$(15,4)$ & $10$ & $11$\\
$(2,5)$ & $93$ & $72$\\
$(8,5)$ & $26$ & $25$\\
$(50,5)$ & $12$ & $13$\\
$(2,6)$ & $267$ & $203$\\
$(10,6)$ & $43$ & $42$\\
$(50,6)$ & $18$ & $18$\\
\hline
\end{tabular}
\end{minipage}
\end{center}
\section{Covering Families} \label{cover}

In this section we first prove Theorem~\ref{thm: new} and then prove Theorem~\ref{thm: mucover}.  Suppose $l \in \N$ and $d$ is a nonnegative integer.  Let $S_{[l+d]}$ denote the symmetric group on $[l+d]$.  Suppose $C \subseteq (S_{[l+d]})^l$.  We say that $C$ is a \emph{covering family of $[l+d]^l$} if for each $(x_1,\dots,x_l) \in [l+d]^l$ there exists a $(\sigma_1, \dots, \sigma_l) \in C$ such that $\sigma_1(x_1), \dots, \sigma_l(x_l)$ are pairwise distinct.  When $\sigma_1(x_1), \dots, \sigma_l(x_l)$ are pairwise distinct, we say that $(\sigma_1, \dots, \sigma_l)$ \emph{covers} $(x_1, \dots, x_l)$.  Consequently, $C$ is a covering family of $[l+d]^l$ if and only if every element of $[l+d]^l$ is covered by some element of $C$.

We now need some notions from DP-coloring.  Suppose $\mathcal{H} = (L,H)$ is a cover of a graph $G$. Suppose $U \subseteq V(G)$. Let $\mathcal{H}_{U} = (L_{U},H_{U})$ where $L_{U}$ is the restriction of $L$ to $U$ and $H_{U} = H[\bigcup_{u \in U} L(u)]$. Clearly, $\mathcal{H}_{U}$ is a cover of $G[U]$. We call $\mathcal{H}_{U}$ the \emph{subcover of }$\mathcal{H}$\emph{ induced by }$U$. Suppose $G'$ is a subgraph of $G$. Let $\mathcal{H}_{G'} = (L_{G'},H_{G'})$ where $L_{G'}$ is the restriction of $L$ to $V(G')$ and $H_{G'}=H[\bigcup_{u \in V(G')} L(u)] - \bigcup_{uv \in E(G) - E(G')}E_H(L(u),L(v))$. We call $\mathcal{H}_{G'}$ the \emph{subcover of }$\mathcal{H}$\emph{ corresponding to }$G'$.

For the rest of this section we will assume $G$ is a graph with $\chi(G) = \chi_{DP}(G)=k$ for some $k\in\N$ and $V(G)=\{u_1,\ldots,u_n\}$. Suppose $r\geq k$, and let $\mathcal{C}$ denote the set of all proper $r$-colorings of $G$. We define an equivalence relation $R$ on $\mathcal{C}$ such that if $g,h\in\mathcal{C}$, then $gRh$ if there exists $j\in\mathbb{Z}_r$
such that $(g(u_i)-h(u_i))\mod r=j$ for all $i\in[n]$. The following lemma is now immediate.

\begin{lem} \label{lem: latequiclass}
Each equivalence class $\mathcal{E}$ of $R$ as defined above is of size
$r$. Furthermore, if $g,h\in\mathcal{E}$ satisfy $g\neq h$, then
$g(u_i)\neq h(u_i)$ for all $i\in[n]$.
\end{lem}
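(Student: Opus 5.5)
The plan is to realize each equivalence class as an orbit of a free action of $\mathbb{Z}_r$ on $\mathcal{C}$. Throughout, colors are identified with residues mod $r$; for instance, use representatives $\{1,\dots,r\}$ with $r$ standing for $0$. For $j \in \mathbb{Z}_r$ and $g \in \mathcal{C}$, define $g_j$ by $g_j(u_i) = (g(u_i)+j) \bmod r$, again taken in $[r]$.

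First I will check that $g_j \in \mathcal{C}$. Translation by $j$ is a bijection of $\mathbb{Z}_r$. So if $u_iu_{i'} \in E(G)$, then $g(u_i) \neq g(u_{i'})$ forces $g_j(u_i) \neq g_j(u_{i'})$. Hence $g_j$ is again a proper $r$-coloring.

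Next I will identify the classes with orbits. By the definition of $R$, $hRg$ holds exactly when there is a $j \in \mathbb{Z}_r$ with $h(u_i) \equiv g(u_i)+j \pmod r$ for all $i$. Since colors lie in $[r]$, this congruence determines $h(u_i)$ uniquely, so $hRg$ if and only if $h = g_j$ for some $j$. The relation $R$ is an equivalence relation because $j=0$, negation, and addition in $\mathbb{Z}_r$ give reflexivity, symmetry, and transitivity. Therefore the class of $g$ is $\mathcal{E} = \{g_j : j \in \mathbb{Z}_r\}$.

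To get the size, it suffices to show that $j \mapsto g_j$ is injective. Here I use $n \geq 1$, since graphs are nonempty. If $g_j = g_{j'}$, then evaluating at $u_1$ gives $g(u_1)+j \equiv g(u_1)+j' \pmod r$, so $j = j'$. Thus $|\mathcal{E}| = r$. For the second claim, take $g \neq h$ in $\mathcal{E}$, so $h = g_j$ with $j \neq 0$ in $\mathbb{Z}_r$. Then $h(u_i) - g(u_i) \equiv j \not\equiv 0 \pmod r$ for every $i$, which gives $h(u_i) \neq g(u_i)$ for all $i \in [n]$.

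There is no real obstacle here; the only care needed is the bookkeeping between $[r]$ and $\mathbb{Z}_r$. The hypothesis $r \geq k$ only guarantees that $\mathcal{C}$ is nonempty, and the argument does not otherwise depend on it.
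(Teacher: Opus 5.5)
Your proof is correct and takes the same approach as the paper. The paper gives no argument beyond calling the lemma immediate from the definition of $R$. Your proof supplies exactly the details it leaves to the reader: each class is the orbit $\{g_j : j \in \mathbb{Z}_r\}$ of cyclic color shifts, $j \mapsto g_j$ is injective (checked at $u_1$), and a nonzero shift changes the color of every vertex.
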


Note that Lemma~\ref{lem: latequiclass} gives a partition of the set of all proper $r$-colorings of $G$ into sets of size $r$ which immediately gives a corresponding partition of proper $\mathcal{H}$-colorings of $G$ into sets of size $r$ when $\mathcal{H}$ is a canonical $r$-fold cover of $G$. 

We wish to study the DP-chromatic number of the Cartesian product of $G$ and a complete bipartite graph.  So, let $K$ be a copy of the complete bipartite graph $K_{l,t}$ with partite sets $X = \{x_{j} : j \in [l]\}$ and $Y = \{y_{q} : q \in [t]\}$. Let $M = G \square K$, $M_{X} = M[\{(u_{i},x_{j}) : i \in [n], j \in [l]\}]$, and for each $q \in [t]$, let $M_{y_{q}} = M[\{(u_{i},y_{q}) : i \in [n]\}]$. 

Now, suppose $\mathcal{H} = (L,H)$ is a $(k+l-1)$-fold cover of $M$.  We recall the definition of \emph{volatile coloring} from~\cite{KMG21}. Intuitively, the notion of volatile coloring formalizes the natural obstruction to extending a partial coloring $I$ of $M_X$ to the rest of the graph $M$. As we will see in Lemma~\ref{lem: nohcoloring}, this turns out to be the only obstruction to coloring the graph $M$.

Let $\mathcal{H}_{X} = (L_{X}, H_{X})$ denote the subcover of $\mathcal{H}$ induced by $V(G)\times X$. Similarly, for each $q \in [t]$, let $\mathcal{H}_{y_{q}} = (L_{y_{q}}, H_{y_{q}})$ denote the subcover of $\mathcal{H}$ induced by $V(G)\times \{y_{q}\}$.  Suppose $I$ is an $\mathcal{H}_{X}$-coloring of $M_{X}$. For each $q \in [t]$, let $D_{q} = \{w \in V(H_{y_{q}}) : N_{H}(w)\cap I = \emptyset\}$ and $H'_{y_{q}} = H[D_{q}]$ (note it is possible that $D_q = \emptyset$). For each $v \in V(M_{y_{q}})$, let $L'_{y_{q}}(v) = L_{y_{q}}(v)\cap D_{q}$ and $\mathcal{H}'_{y_{q}} = (L'_{y_{q}}, H'_{y_{q}})$.  We say $I$ is \emph{volatile} for $M_{y_{q}}$, if $\mathcal{H}'_{y_{q}}$ is a bad cover of $M_{y_{q}}$.  The following result from~\cite{KMG21} will be useful for us.

\begin{lem} [\cite{KMG21}] \label{lem: nohcoloring}
The cover $\mathcal{H}$ is a bad cover of $M$ if and only if for every $\mathcal H_X$-coloring $I_X$ of $M_X$, there exists $q\in[t]$ such that $I_X$ is volatile for $M_{y_q}$.
\end{lem}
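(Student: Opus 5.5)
The plan is to prove both directions directly from the definitions. Everything rests on the structure of $M = G \square K$. Its edges are of two kinds: edges inside a single fiber $V(G)\times\{z\}$ for $z \in X\cup Y$, and edges of the form $(u_i,x_j)(u_i,y_q)$. There are no edges of $M$ between $V(G)\times\{y_q\}$ and $V(G)\times\{y_{q'}\}$ for $q\neq q'$, and none between different fibers $V(G)\times\{x_j\}$, $V(G)\times\{x_{j'}\}$; the latter fact is already built into $\mathcal{H}_X$ being the induced subcover. Since edges of $H$ between $L(v)$ and $L(w)$ can only exist when $vw\in E(M)$, the same non-adjacencies hold in $H$ between the corresponding color sets.

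First suppose that for every $\mathcal{H}_X$-coloring $I_X$ of $M_X$ there is a $q$ with $I_X$ volatile for $M_{y_q}$, and, for contradiction, that $I$ is an $\mathcal{H}$-coloring of $M$. Let $I_X = I \cap V(H_X)$. This set is independent and meets each $L(v)$ with $v\in V(M_X)$ exactly once, so it is an $\mathcal{H}_X$-coloring of $M_X$. Pick $q$ with $I_X$ volatile for $M_{y_q}$ and let $I_q = I\cap V(H_{y_q})$. Every $w\in I_q$ has no neighbor in $I\supseteq I_X$, so $w\in D_q$. Hence $I_q$ meets each $L'_{y_q}(v)$ exactly once and is independent in $H'_{y_q}$, so it is an $\mathcal{H}'_{y_q}$-coloring of $M_{y_q}$. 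This contradicts the badness of $\mathcal{H}'_{y_q}$. The direction also covers the vacuous case in which $M_X$ has no $\mathcal{H}_X$-coloring at all, since then $\mathcal{H}$ is trivially bad.

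Conversely, suppose $\mathcal{H}$ is bad and, for contradiction, that some $\mathcal{H}_X$-coloring $I_X$ is volatile for no $M_{y_q}$. Then for each $q\in[t]$ choose an $\mathcal{H}'_{y_q}$-coloring $I_q$ of $M_{y_q}$, and set $I = I_X\cup\bigcup_{q} I_q$. Clearly $|I\cap L(v)|=1$ for each $v\in V(M)$. For independence, check pairs by location:
\begin{itemize}
\item pairs inside $I_X$, or inside a single $I_q$, are non-adjacent by choice;
\item a vertex of $I_q$ and a vertex of $I_X$ are non-adjacent because $I_q\subseteq D_q$ consists of colors with no neighbor in $I_X$;
\item vertices of $I_q$ and $I_{q'}$ with $q\ne q'$ are non-adjacent because $M$ has no edges between those fibers.
\end{itemize}
Thus $I$ is an $\mathcal{H}$-coloring of $M$, a contradiction.

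The argument is essentially bookkeeping. The only step needing care is the cross-fiber independence, which requires correctly identifying the absence of $M$-edges between distinct $y$-fibers and using the definition of $D_q$.
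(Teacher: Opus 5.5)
Your argument is correct and complete. Both directions follow from the definitions exactly as you describe. The substantive points are that $Y$ is independent in $K$, so $H$ has no edges between color lists of distinct fibers $V(G)\times\{y_q\}$ and $V(G)\times\{y_{q'}\}$, and that $I_q\subseteq D_q$ has no neighbors in $I_X$. The paper does not prove this lemma itself but imports it from earlier work, so there is no proof in the paper to compare against; your verification from the definitions is the natural argument for it.
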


We are now ready to prove Theorem~\ref{thm: new} which we restate.

\begin{customthm}{\ref{thm: new}}
For $k \in \N$, let $G$ be a graph such that $\chi(G) = \chi_{DP}(G) = k$. For $l \geq 2$, suppose $C$ is a covering family of $[l+k-1]^l$ and $c = |C|$. Then, $\chi_{DP}(G \square K_{l,t}) = k+l$ whenever
\[
t \geq c \left(\frac{P(G,k+l-1)}{k+l-1}\right)^l.
\]
\end{customthm}

\begin{proof}
Let the vertices of $G$ be ordered as $u_{1},\hdots,u_{n}$, and let $K$ be the complete bipartite graph with bipartition $X = \{x_{j} : j \in [l]\}$ and $Y = \{y_{q} : q \in [t]\}$. Let $M = G\square K$ and $M_{X} = M[\{(u_{i},x_{j}) : i \in [n], j \in [l]\}]$.  By Theorem~\ref{thm: cartprod}, we have $\chi_{DP}(M) \leq \chi_{DP}(G) + \col(K) - 1 \leq k+l$. It remains to show that $\chi_{DP}(M) > k+l-1$. We do this by constructing a bad $(k+l-1)$-fold cover $\mathcal{H} = (L,H)$ of $M$.    

For each $v \in V(K)$, let $\mathcal{H}_{v} = (L_{v}, H_{v})$ be a canonical $(k+l-1)$-fold cover of $M[V(G)\times\{v\}]$.  For each $v \in V(K)$, we let $L(u,v) = L_{v}(u,v)$ for every $u \in V(G)$ and create edges so that $H[\bigcup_{u \in V(G)} L(u,v)] = H_{v}$. For simplicity, for each $\lambda \in [k+l-1]$, we denote each vertex $((u,v),\lambda) \in V(H)$ as $(u,v,\lambda)$. 

Now, we will use $C$ to construct matchings (possibly empty) between $L(u_i,x_j)$ and $L(u_i,y_q)$ for each $i\in[n]$, $j\in[l]$, and $q\in[t]$ which will complete the construction of $\mathcal H$; any matchings not specified below are taken to be empty.  We suppose that $C = \{\pi_1, \dots, \pi_c \}$ where $\pi_i = (\sigma_{i,1},\dots,\sigma_{i,l})$ for each $i \in [c]$.

Let $\mathcal{H}_{X} = (L_{X},H_{X})$ denote the cover of $M_{X}$ where $L_{X}(u,x_{j}) = L_{x_{j}}(u,x_{j})$ for every $(u,x_{j}) \in V(G) \times X$ and $H_{X} = \bigcup_{j=1}^{l} H_{x_{j}}$. Note that $H_{x_{1}}, H_{x_{2}}, \ldots, H_{x_{l}}$ are pairwise vertex disjoint. Let $\mathcal{C}$ and $\mathcal{I}$ denote the collection of all proper $(k+l-1)$-colorings of $M_X$ and the collection of all $\mathcal{H}_{X}$-colorings of $M_{X}$ respectively. Note that since $\mathcal{H}_{X}$ is a canonical $(k+l-1)$-fold cover of $M_{X}$, the function $f: \mathcal{C} \rightarrow \mathcal{I}$ where $f(h) = \{(u_{i},x_{j},h(u_{i},x_{j})) : i\in [n], j \in [l]\}$ is a bijection. Also $P_{DP}(M_{X},\mathcal{H}_{X}) = P(M_{X},k+l-1)$. Let $d = P(G,k+l-1)$ so that $P_{DP}(M_{X},\mathcal{H}_{X}) = d^l$.

For each $j \in [l]$, let $\mathcal{C}_{j}$ and $\mathcal{I}_{j}$ denote the collection of proper $(k+l-1)$-colorings of $M[V(G)\times \{x_{j}\}]$ and the collection of $\mathcal{H}_{x_{j}}$-colorings of $M[V(G)\times \{x_{j}\}]$ respectively. Note that for each $j \in [l]$, the function $f_{j}: \mathcal{C}_{j} \rightarrow \mathcal{I}_{j}$ where $f_{j}(h) = \{(u_{i},x_{j},h(u_{i},x_{j})) : i\in [n]\}$ is a bijection. For each $j \in [l]$, $\mathcal{C}_{j}$ is partitioned into equivalence classes by the equivalence relation $R$ defined above. Since $d=P(G,k+l-1)$, Lemma~\ref{lem: latequiclass} implies that
$k+l-1$ divides $d$. We let $b = d/(k+l-1)$ and arbitrarily name these equivalence classes $\mathcal{E}_{j,p}$ where $p \in [b]$. So, $\{f_{j}(\mathcal{E}_{j,p}) : p \in [b] \}$ is a partition of $\mathcal{I}_j$.

Now, arbitrarily name the elements of the set $[b]^l$ as $\boldsymbol{p}_{1}, \ldots, \boldsymbol{p}_{b^l}$.  For each $m \in [b^l]$, let $C_{\boldsymbol{p}_m}$ consist of each $h \in \mathcal{C}$ such that for each $j \in [l]$ the restriction of $h$ to $V(G) \times \{x_j\}$ is an element of $\mathcal{E}_{j,z}$ where $z$ is the $j^{th}$ coordinate of $\boldsymbol{p}_m$.  Note that each element of $C_{\boldsymbol{p}_m}$ can be constructed as follows: for each $j \in [l]$ select a coloring in $\mathcal{E}_{j,z}$
where $z$ is the $j^{th}$ coordinate of $\boldsymbol{p}_m$,
then take the union of the $l$ selected colorings.  Since $|\mathcal{E}_{j,p}| = k+l-1$ for each $j \in [l]$ and $p \in [b]$, $|C_{\boldsymbol{p}_m}| = (k+l-1)^l$.  Moreover, $\left \{C_{\boldsymbol{p}_{m}} : m \in \left[b^l\right] \right \}$ is a partition of $\mathcal{C}$. Suppose $S_{\boldsymbol{p}_{m}} = f(C_{\boldsymbol{p}_m})$. Clearly $\left \{S_{\boldsymbol{p}_{m}} : m \in [b^l] \right \}$ is a partition of $\mathcal{I}$.

Note that by Lemma~\ref{lem: nohcoloring}, if $\mathcal{H}$ is such that for each $a \in \left[b^l\right]$, every $s \in S_{\boldsymbol{p}_{a}}$ is volatile for at least one of $M[V(G) \times \{y_{1}\}],\ldots, M[V(G) \times \{y_{t}\}]$, then $M$ does not have an $\mathcal{H}$-coloring. For each $a \in [b^l]$, we associate to $S_{\boldsymbol{p}_{a}}$ the following $c$ copies of $G$: $M[V(G) \times \{y_{c(a-1)+1}\}], \ldots, M[V(G) \times \{y_{c(a-1)+c}\}]$. Note that this can be done since $t \geq cb^l$.

Now, for each $a \in [b^l]$, we use $C$ to construct matchings between $L(u_{i},x_{j})$ and $L(u_{i},y_{q})$ for each $i \in [n], j \in [l]$, and $q \in \{c(a-1)+1,\ldots, c(a-1)+c\}$ so that each $s \in S_{\boldsymbol{p}_{a}}$ is volatile for at least one of $M[V(G) \times \{y_{c(a-1)+1}\}],\ldots, M[V(G) \times \{y_{c(a-1)+c}\}]$.

For each $a \in [b^{l}]$ we construct matchings as follows. Suppose that $\boldsymbol{p}_{a} = (p_{1}, \ldots, p_{l})$.  For each $j \in [l]$, $f_{j}(\mathcal{E}_{j,p_{j}})$ is a partition of the vertex set of $H_{x_{j}}$; by Lemma~\ref{lem: latequiclass}, we may suppose $f_{j}(\mathcal{E}_{j,p_{j}}) = \{I^{j,p_{j}}_{1},\ldots,I^{j,p_{j}}_{k+l-1}\}$. For each $j \in [l]$, $\omega \in [c]$, $i \in [n]$, and $z \in [k+l-1]$, we add an edge between the vertex in $I^{j,p_j}_{z} \cap L(u_{i},x_{j})$ and the vertex $(u_{i},y_{c(a-1)+\omega},\sigma_{\omega,j}(z))$.  This completes the construction of $\mathcal{H}$.

To show that $\mathcal{H}$ is indeed a bad $(k+l-1)$-fold cover of $M$, we will show that for arbitrary $a \in \left[b^l\right]$ and $s \in S_{\boldsymbol{p}_{a}}$, $s$ is volatile for at least one of: $M[V(G) \times \{y_{c(a-1)+1}\}], \ldots, M[V(G) \times \{y_{c(a-1)+c}\}]$.  Suppose $\boldsymbol{p}_{a} = (p_{1}, \ldots, p_{l})$ and for each $j \in [l]$, $f_{j}(\mathcal{E}_{j,p_{j}}) = \{I^{j,p_{j}}_{1},\ldots,I^{j,p_{j}}_{k+l-1}\}$.  Since each element of $S_{\boldsymbol{p}_{a}}$ can be constructed by choosing an element of $f_{j}(\mathcal{E}_{j,p_{j}})$ for each $j \in [l]$ and then taking the union of the selected elements, there is a $(a_1, \ldots, a_l) \in [k+l-1]^l$ such that 
$$s = \bigcup_{j=1}^l I^{j,p_{j}}_{a_j}.$$
Since $C$ is a covering family of $[l+k-1]^l$, there is a $\pi_o \in C$ such that $\sigma_{o,1}(a_1), \ldots, \sigma_{o,l}(a_l)$ are pairwise distinct.  We claim that $s$ is volatile for $M[V(G) \times \{y_{c(a-1)+o}\}]$.  To see why, first for each $v \in V(G)$, let
$$L'(v,y_{c(a-1)+o}) = L(v,y_{c(a-1)+o})-N_H(s) \text{ and } H'=H \left[\bigcup_{v \in V(G)} L'(v,y_{c(a-1)+o})\right].$$ 
Since $\sigma_{o,1}(a_1), \ldots, \sigma_{o,l}(a_l)$ are pairwise distinct, the construction of $\mathcal{H}$ implies that there is a $\Gamma \subseteq [k+l-1]$ with $|\Gamma|=k-1$ satisfying
$$L'(v,y_{c(a-1)+o}) = \{(v,y_{c(a-1)+o},\gamma) : \gamma \in \Gamma\}$$
for each $v \in V(G)$. When $k=1$, we have $L'(v,y_{c(a-1)+o})=\emptyset$ for each $v\in V(G)$, so $(L',H')$ is clearly a bad cover. When $k\geq 2$, $(L',H')$ is a canonical $(k-1)$-fold cover, so the fact that $\chi(G)=k$ implies that it is a bad cover. Thus, $s$ is volatile for $M[V(G) \times \{y_{c(a-1)+o}\}]$.
\end{proof}

We note that when $l=2$, there is a covering family of $[k+1]^2$ of size $2$ for every $k\in\N$. Indeed, let $\rho$ be a permutation of $[k+1]$ with no fixed points. Then, $\{(\mathrm{id},\mathrm{id}),(\mathrm{id},\rho)\}$ is a covering family of $[k+1]^2$.  If $a\neq b$, then $(\mathrm{id},\mathrm{id})$ covers $(a,b)$, and if $a=b$, then $(\mathrm{id},\rho)$ covers $(a,b)$. Consequently, Theorem~\ref{thm: new} implies that if $\chi(G)=\chi_{DP}(G)=k$, then $\chi_{DP}(G\square K_{2,t})=k+2$ whenever $t\geq 2(P(G,k+1)/(k+1))^2$.  To finish the section we prove Theorem~\ref{thm: mucover}, which we restate.

\begin{customthm}{\ref{thm: mucover}}
For each $l\in\N$, $\mu(l)=\kappa(l)$.
\end{customthm}

\begin{proof}
Clearly, the result holds when $l=1$. So, suppose $l\geq 2$. First, we show that $\mu(l)\leq \kappa(l)$. Let $C$ be a covering family of $[l]^l$ with $|C|=\kappa(l)$. Applying Theorem~\ref{thm: new} with $G=K_1$ and $k=1$, we have $\chi_{DP}(K_{l,t})=l+1$ whenever $t\geq \kappa(l)$ since $P(K_1,l)=l$. Thus, $\mu(l)\leq \kappa(l)$.

We now show that $\kappa(l)\leq \mu(l)$. Let $t=\mu(l)$, and let $\mathcal{H}=(L,H)$ be a bad $l$-fold cover of $K=K_{l,t}$. Let $X=\{x_1,\ldots,x_l\}$ and $Y=\{y_1,\ldots,y_t\}$ be the partite sets of $K$. By adding edges to $H$ if necessary, we may suppose that $E_H(L(x_i),L(y_j))$ is a perfect matching for each $i\in[l]$ and $j\in[t]$ since adding edges cannot create an $\mathcal{H}$-coloring.

For each $v\in V(K)$, arbitrarily label the elements of $L(v)$ with the elements of $[l]$. For each $i\in[l]$ and $j\in[t]$, let $\sigma_{j,i}$ be the permutation of $[l]$ such that the element of $L(x_i)$ labeled $a$ is matched to the element of $L(y_j)$ labeled $\sigma_{j,i}(a)$ for each $a\in[l]$. Finally, let $C=\{(\sigma_{j,1},\ldots,\sigma_{j,l}):j\in[t]\}$.  We claim that $C$ is a covering family of $[l]^l$ which will immediately imply $\kappa(l) \leq t = \mu(l)$.

Suppose $(a_1,\ldots,a_l)\in[l]^l$. Since $X$ is an independent set in $K$, the $l$-element set $I$ formed by selecting the element of $L(x_i)$ labeled $a_i$ for each $i\in[l]$ is an $\mathcal{H}_X$-coloring of $K[X]$. Since $\mathcal{H}$ is a bad cover, $I$ cannot be extended to an $\mathcal{H}$-coloring of $K$. 

Consequently, there must be some $j\in[t]$ such that every element in $L(y_j)$ has a neighbor in $I$. By definition of the permutations, this means: $\sigma_{j,1}(a_1),\ldots,\sigma_{j,l}(a_l)$ are pairwise distinct. Thus, $C$ is a covering family of $[l]^l$ and $\kappa(l)\leq\mu(l)$. The desired result immediately follows.
\end{proof}

Thus, the problem of determining $\mu(l)$ is precisely the problem of determining the minimum size of a covering family of $[l]^l$.

\section{Bounds on $\mu(l)$} \label{Gunjan}

In this section we begin by using Theorem~\ref{thm: mucover} to determine $\mu(4)$. Recall that Theorem~\ref{thm: mudrock} gives $11\leq \mu(4)\leq 45$. By Theorem~\ref{thm: mucover}, to determine $\mu(4)$ it suffices to first construct a covering family of $[4]^4$ of size 12, and then prove a covering family of $[4]^4$ of size 11 does not exist.

\begin{pro} \label{pro: 4upper}
We have $\mu(4) = 12$.
\end{pro}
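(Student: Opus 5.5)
By Theorem~\ref{thm: mucover} it suffices to show $\kappa(4)=12$. That means exhibiting a covering family of $[4]^4$ of size $12$, and showing that no covering family of size $11$ exists. I would first reduce the search space using the symmetries of the covering condition. Each element $\pi=(\sigma_1,\dots,\sigma_4)$ covers exactly those $(x_1,\dots,x_4)$ with $\sigma_1(x_1),\dots,\sigma_4(x_4)$ pairwise distinct. These are $4!=24$ of the $256$ tuples, one for each bijective target pattern. The coverage set of $\pi$ is unchanged if we replace $\pi$ by $(\tau\sigma_1,\dots,\tau\sigma_4)$ for any $\tau\in S_{[4]}$. Hence we may normalize every member of a covering family to have $\sigma_1=\mathrm{id}$, which leaves $24^3=13824$ candidate elements. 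In addition, a family remains covering if we simultaneously precompose the $j$-th coordinate of every member by a fixed $\alpha_j\in S_{[4]}$, or permute the $l$ coordinates. These global symmetries can be used to fix part of the family without loss of generality.

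For the upper bound, I would look for a structured family of $12$ elements, for instance members whose coordinates are translations $x\mapsto x+a$ in $\Z_4$ (or affine maps). For such members the covering condition becomes a condition on differences, $x_1,\,x_2+a,\,x_3+b,\,x_4+c$ pairwise distinct. If that fails, I would run a computer search (greedy plus local search, or a SAT/ILP solver) over the $13824$ normalized candidates, seeded with a subgroup-invariant ansatz to shrink the search. The search is easy to certify afterwards: the resulting $12$ quadruples of permutations are listed, and one checks that all $256$ tuples are covered.

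For the lower bound, the counting bound gives only $\lceil 256/24\rceil=11$, matching Theorem~\ref{thm: mudrock}. Excluding $11$ is therefore the main obstacle, and I would do it by integer programming as a set cover problem. There is one binary variable $z_\pi$ for each normalized $\pi$ (with $\sigma_1=\mathrm{id}$), and one constraint $\sum_{\pi \text{ covers } x} z_\pi\ge 1$ for each $x\in[4]^4$. I would then show the optimum is $12$, equivalently that adding $\sum z_\pi\le 11$ makes the program infeasible.

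The LP relaxation has value about $10.67$, so it alone does not suffice, and branching is needed. To make this tractable I would add symmetry breaking. One can assume the family contains the all-identity element, using the $\alpha_j$-relabeling to move any member to $(\mathrm{id},\dots,\mathrm{id})$. Further lexicographic constraints would come from permuting coordinates and from those relabelings that fix the identity member. A useful additional cut comes from counting: with $11$ members the total coverage is $264$, so the total overcoverage beyond $256$ is at most $8$. Combining the two bounds yields $\kappa(4)=12$, and hence $\mu(4)=12$ by Theorem~\ref{thm: mucover}.
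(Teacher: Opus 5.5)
Your plan is the paper's proof. You reduce to $\kappa(4)$ via Theorem~\ref{thm: mucover}, certify $\kappa(4)\le 12$ with an explicit list of $12$ quadruples checked against all $256$ tuples, and certify $\kappa(4)>11$ by showing the set-cover integer program over the $24^3=13824$ quadruples with $\sigma_1=\mathrm{id}$ is infeasible once $\sum z_\pi\le 11$ is imposed; the paper runs this with HiGHS and needs none of your extra symmetry breaking. Note, however, that your proposal only describes these computations, while their outputs (the paper's table and the solver's infeasibility verdict) are the actual proof, and your first suggestion for the upper bound cannot work. A member whose coordinates are all $\Z_4$-translations covers only tuples with $x_1+x_2+x_3+x_4$ in one fixed residue class mod $4$, since $0+1+2+3\equiv 2$, and a short check shows that each of the four classes then needs four such members, so translation families have at least $16$ members and the computer search is not optional.
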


\begin{proof}
First, we show that $\mu(4)\leq 12$. By Theorem~\ref{thm: mucover}, it suffices to construct a covering family of $[4]^4$ of size 12. For each $j\in[12]$, let $\pi_j=(\sigma_{j,1},\sigma_{j,2},\sigma_{j,3},\sigma_{j,4})\in(S_{[4]})^4.$  Using one-line notation for permutations of $[4]$, each of the twelve ordered quadruples is given in the table below.

\begin{center}
\begin{tabular}{c|c|c|c|c}
\hline
$j$ & $\sigma_{j,1}$ & $\sigma_{j,2}$ & $\sigma_{j,3}$ & $\sigma_{j,4}$ \\
\hline
$1$  & $(1,2,3,4)$ & $(3,2,4,1)$ & $(3,2,1,4)$ & $(3,1,4,2)$ \\
$2$  & $(1,2,3,4)$ & $(1,3,2,4)$ & $(3,2,1,4)$ & $(1,4,2,3)$ \\
$3$  & $(1,2,3,4)$ & $(1,3,2,4)$ & $(1,4,3,2)$ & $(2,4,1,3)$ \\
$4$  & $(1,2,3,4)$ & $(4,3,2,1)$ & $(2,3,4,1)$ & $(1,2,3,4)$ \\
$5$  & $(1,2,3,4)$ & $(2,1,3,4)$ & $(3,2,1,4)$ & $(2,1,3,4)$ \\
$6$  & $(1,2,3,4)$ & $(4,2,1,3)$ & $(3,2,1,4)$ & $(4,2,1,3)$ \\
$7$  & $(1,2,3,4)$ & $(4,2,1,3)$ & $(1,4,3,2)$ & $(1,2,4,3)$ \\
$8$  & $(1,2,3,4)$ & $(2,1,3,4)$ & $(1,4,3,2)$ & $(3,1,2,4)$ \\
$9$  & $(1,2,3,4)$ & $(2,1,4,3)$ & $(2,1,4,3)$ & $(1,4,3,2)$ \\
$10$ & $(1,2,3,4)$ & $(4,1,2,3)$ & $(4,3,2,1)$ & $(3,4,1,2)$ \\
$11$ & $(1,2,3,4)$ & $(2,3,4,1)$ & $(4,1,2,3)$ & $(3,2,1,4)$ \\
$12$ & $(1,2,3,4)$ & $(3,2,4,1)$ & $(1,4,3,2)$ & $(4,1,3,2)$ \\
\hline
\end{tabular}
\end{center}

Let $C=\{\pi_1,\ldots,\pi_{12}\}$.  For each $j\in[12]$, let $A_j$ consist of all $(a_1,a_2,a_3,a_4)\in[4]^4$ such that $\{\sigma_{j,1}(a_1),\sigma_{j,2}(a_2),
\sigma_{j,3}(a_3),\sigma_{j,4}(a_4)\}=[4].$ Clearly, $|A_j|=4!=24$ for each $j\in[12]$. A direct computation
shows that exactly 240 elements of $[4]^4$ belong to exactly one of the sets $A_j$, while the remaining 16 elements belong to exactly three of the sets $A_j$ (see Appendix~\ref{Appendmu4}). In particular, $\left|\bigcup_{j=1}^{12}A_j\right|=240+16=256=|[4]^4|$. Thus,
$\bigcup_{j=1}^{12}A_j=[4]^4$ which means $C$ is a covering family of $[4]^4$.

We now turn our attention to proving that $\mu(4)>11$. Our strategy for proving this is to set up an IP (i.e., an integer program). We begin with an important observation. Suppose that $\mathcal{C}=\{\pi_j:j\in[k]\}$ is a covering family of $[4]^4$, where $\pi_j=(\sigma_{j,1},\sigma_{j,2},\sigma_{j,3},\sigma_{j,4})\in(S_{[4]})^4$ for each $j\in[k]$. As above, for each $j\in[k]$, let $A_j$ consist of all $(a_1,a_2,a_3,a_4)\in[4]^4$ such that $\{\sigma_{j,1}(a_1),\sigma_{j,2}(a_2),\sigma_{j,3}(a_3),\sigma_{j,4}(a_4)\}=[4]$.

Now, for any $\psi\in S_{[4]}$, notice that $\{\sigma_{j,1}(a_1),\sigma_{j,2}(a_2),\sigma_{j,3}(a_3),\sigma_{j,4}(a_4)\}=[4]$ if and only if
$\{(\psi\circ\sigma_{j,1})(a_1),(\psi\circ\sigma_{j,2})(a_2),(\psi\circ\sigma_{j,3})(a_3),(\psi\circ\sigma_{j,4})(a_4)\}=[4]$. Thus, taking $\psi=\sigma_{j,1}^{-1}$, we see that every $A_j$ can be taken to correspond to an ordered quadruple whose first coordinate is the identity permutation. Consequently, if there is a covering family of $[4]^4$ of size $k$, then there is one of size at most $k$ with the property that the first coordinate of each element is the identity permutation.

We now use this fact to set up an IP. Let $\mathcal{S}$ be the set of ordered quadruples in $(S_{[4]})^4$ whose first coordinate is the identity permutation. Note that $|\mathcal{S}|=(4!)^3=13,824$. Index the elements of $\mathcal{S}$ so that $\mathcal{S}=\{\tau_1,\ldots,\tau_{13824}\}$. For each $j\in[13824]$, if $\tau_j=(\sigma_{j,1},\sigma_{j,2},\sigma_{j,3},\sigma_{j,4})$, let $B_j$ consist of all $(a_1,a_2,a_3,a_4)\in[4]^4$ such that $\{\sigma_{j,1}(a_1),\sigma_{j,2}(a_2),\sigma_{j,3}(a_3),\sigma_{j,4}(a_4)\}=[4]$.

For each $j\in[13824]$, let $x_j$ be a binary variable, where $x_j=1$ means that $B_j$ is selected. By what we have shown above, there is a covering family of $[4]^4$ of size at most 11 if and only if the following IP is feasible:
\[
x_j\in\{0,1\}\text{ for each }j\in[13824],\qquad
\sum_{\substack{j\in[13824]\\a\in B_j}}x_j\geq1\text{ for each }a\in[4]^4,\qquad
\sum_{j=1}^{13824}x_j\leq11.
\]
The second condition ensures that every element of $[4]^4$ is contained in at least one selected $B_j$, while the third condition ensures that at most 11 sets are selected. We solved this IP using HiGHS 1.15.1, which determined that the model is infeasible after 5934.81 seconds, 3720 nodes, and 7,107,177 LP iterations. Thus, there is no covering family of $[4]^4$ of size at most 11, and hence $\kappa(4)>11$. The code used to generate and solve this IP, together with the solver output, is available in~\cite{M26comp}.

Since the covering family constructed above shows that $\kappa(4)\leq 12$, we have $\kappa(4)=12$. The result now follows from Theorem~\ref{thm: mucover}.
\end{proof}

We now turn our attention to improving the lower bound in Theorem~\ref{thm: mudrock}.  We begin by presenting some notation that we will use for the remainder of the section.  For
$l \in \N$, $l \geq 3$, and $\pi = (\sigma_1, \ldots, \sigma_l) \in (S_{[l]})^l$, let
\[
A_{\pi}= \left\{(a_1, \ldots, a_l) \in [l]^l :
\sigma_1(a_1), \ldots, \sigma_l(a_l) \text{ are pairwise distinct}\right\}.
\]
Clearly, $C \subseteq (S_{[l]})^l$ is a covering family of $[l]^l$ if and only if $\bigcup_{\tau \in C} A_{\tau} = [l]^l$.  Since the function that maps each $(a_1, \ldots, a_l) \in [l]^l$ to $(\sigma_1(a_1), \ldots, \sigma_l(a_l))$ is a bijection, $|A_{\pi}|=l!$.

Now, suppose $s \in [l-1]$. For each $\boldsymbol{c} = (c_{s+1}, \ldots, c_l) \in
[l]^{l-s}$, let
\[
F_{\boldsymbol{c}} =
\set{(a_1, \ldots, a_s, c_{s+1}, \ldots, c_l) : a_1, \ldots, a_s \in [l]}.
\]
Clearly, $\{F_{\boldsymbol{c}} : \boldsymbol{c} \in [l]^{l-s} \}$ is a partition of $[l]^l$ into parts of size $l^s$.  For $\pi = (\sigma_1, \ldots, \sigma_l) \in (S_{[l]})^l$ it is immediately clear that $|A_{\pi} \cap F_{\boldsymbol{c}}| = s!$ when $\sigma_{s+1}(c_{s+1}), \ldots, \sigma_l(c_l)$ are pairwise distinct, and $|A_{\pi} \cap F_{\boldsymbol{c}}| = 0$ otherwise.  We are now ready to present a lemma and then prove Theorem~\ref{thm: uptheoddlow}.

\begin{lem} \label{lem: count1}
Suppose $l \in \N$, $l \geq 3$, $s \in [l-1]$, and $\pi \in (S_{[l]})^l$.  Then,
\[
\left| \left\{\boldsymbol{c} \in [l]^{\,l-s} :
A_{\pi} \cap F_{\boldsymbol{c}} \neq \emptyset\right\}\right| = \frac{l!}{s!}.
\]
\end{lem}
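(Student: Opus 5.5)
The plan is a double count of the pairs $(\boldsymbol{a},\boldsymbol{c})$ with $\boldsymbol{a} \in A_{\pi} \cap F_{\boldsymbol{c}}$. This uses the two facts established just before the lemma: $|A_{\pi}| = l!$, and $|A_{\pi} \cap F_{\boldsymbol{c}}|$ is either $s!$ or $0$.

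Write $\pi = (\sigma_1,\ldots,\sigma_l)$ and let $T = \{\boldsymbol{c} \in [l]^{l-s} : A_{\pi} \cap F_{\boldsymbol{c}} \neq \emptyset\}$. The sets $F_{\boldsymbol{c}}$, $\boldsymbol{c} \in [l]^{l-s}$, partition $[l]^l$, so
\[
l! = |A_{\pi}| = \sum_{\boldsymbol{c} \in [l]^{l-s}} |A_{\pi} \cap F_{\boldsymbol{c}}|.
\]
Each summand is $s!$ or $0$. It is nonzero exactly when $\boldsymbol{c} \in T$, which happens exactly when $\sigma_{s+1}(c_{s+1}),\ldots,\sigma_l(c_l)$ are pairwise distinct. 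Hence $l! = s!\,|T|$, giving $|T| = l!/s!$.

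The only step needing care is the claim $|A_{\pi}\cap F_{\boldsymbol{c}}| = s!$ in the nonzero case, which the paper states as immediate. If I want to justify it, the argument goes as follows. The values $\sigma_{s+1}(c_{s+1}),\ldots,\sigma_l(c_l)$ are $l-s$ distinct elements of $[l]$, leaving a complement $R$ of size $s$. A tuple $(a_1,\ldots,a_s,c_{s+1},\ldots,c_l)$ lies in $A_{\pi}$ iff $(\sigma_1(a_1),\ldots,\sigma_s(a_s))$ is an ordering of $R$. Since each $\sigma_i$ is a bijection, there are exactly $s!$ such choices of $(a_1,\ldots,a_s)$.

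As a cross-check, one can also count $T$ directly: it has the same size as the set of injective sequences $(d_{s+1},\ldots,d_l)$ in $[l]$, namely $l(l-1)\cdots(s+1) = l!/s!$, via $c_i = \sigma_i^{-1}(d_i)$. No real obstacle is expected.
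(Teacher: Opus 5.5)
Your argument is correct, but your main route differs from the paper's; your ``cross-check'' is exactly the paper's proof.

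The paper counts $T$ directly. It uses only two things. The first is the criterion that $A_{\pi}\cap F_{\boldsymbol{c}}\neq\emptyset$ if and only if $\sigma_{s+1}(c_{s+1}),\ldots,\sigma_l(c_l)$ are pairwise distinct. The second is that $\boldsymbol{c}\mapsto(\sigma_{s+1}(c_{s+1}),\ldots,\sigma_l(c_l))$ is a bijection of $[l]^{l-s}$. Together these reduce the count to the $l!/s!$ strings of length $l-s$ over $[l]$ with pairwise distinct entries.

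Your main argument instead double counts $A_{\pi}$ over the partition $\{F_{\boldsymbol{c}}\}$ and divides $|A_{\pi}|=l!$ by the common fiber size $s!$. This needs two inputs the paper's proof does not use: $|A_{\pi}|=l!$, and the exact value $|A_{\pi}\cap F_{\boldsymbol{c}}|=s!$ for every nonempty fiber. You justify the latter correctly: the first $s$ images must enumerate the $s$-element complement $R$, and each $\sigma_i$ is a bijection.

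What your version gains is that it presents the lemma as the identity $s!\,|T|=|A_{\pi}|$, i.e., $A_{\pi}$ is tiled by equal-size nonempty fibers. The proof of Theorem~\ref{thm: uptheoddlow} combines exactly these two facts, the number of nonempty fibers and the fiber size $s!$. The paper's version is slightly more economical and self-contained.
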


\begin{proof}
Suppose $\pi = (\sigma_1, \ldots, \sigma_l)$.  We know for each $\boldsymbol{c} = (c_{s+1}, \ldots, c_l) \in
[l]^{l-s}$, $A_{\pi} \cap F_{\boldsymbol{c}} \neq \emptyset$ if and only if $\sigma_{s+1}(c_{s+1}),
\ldots, \sigma_l(c_l)$ are pairwise distinct.  Also the function that maps each $\boldsymbol{c} = (c_{s+1}, \ldots, c_l) \in [l]^{l-s}$ to $(\sigma_{s+1}(c_{s+1}), \ldots, \sigma_l(c_l))$ is a bijection.  Consequently, the number of $\boldsymbol{c} \in [l]^{l-s}$ with the property $A_{\pi} \cap F_{\boldsymbol{c}} \neq \emptyset$ is the same as the number of strings with pairwise distinct entries of length $l-s$ with symbols taken from $[l]$.  The result immediately follows.
\end{proof}

\begin{customthm}{\ref{thm: uptheoddlow}}
For each $l \in \N$ with $l \geq 3$ and $s \in [l-1]$,
\[
\mu(l) = \kappa(l) \geq
\ceil{\frac{s! \, l^{l-s}}{l!} \ceil{\frac{l^s}{s!}}}.
\]
Consequently,
\[
\mu(l) = \kappa(l) \geq
\max_{s \in [l-1]}
\ceil{\frac{s! \, l^{l-s}}{l!} \ceil{\frac{l^s}{s!}}}.
\]
\end{customthm}

\begin{proof}
 Fix $s \in [l-1]$.  Suppose $C$ is a covering family of $[l]^l$.  We must show that $|C|$ is at least the desired lower bound.  Let
\[
\mathcal{D} = \left\{ (\pi, \boldsymbol{c}) \in C \times [l]^{\,l-s} :
A_{\pi} \cap F_{\boldsymbol{c}} \neq \emptyset \right\}.
\]
On the one hand, Lemma~\ref{lem: count1} implies that $|\mathcal{D}| = |C|l!/s!$.  On the other hand, since we know that for fixed $\boldsymbol{d} \in [l]^{l-s}$, $|A_{\pi} \cap F_{\boldsymbol{d}}| \leq s!$ for each $\pi \in C$, there are at least $\lceil l^s/s! \rceil$ elements $\pi \in C$ with the property $A_{\pi} \cap F_{\boldsymbol{d}} \neq \emptyset$.  Consequently,
\[
|\mathcal{D}| \geq l^{l-s} \ceil{\frac{l^s}{s!}}.
\]
Thus, we have that
\[
\frac{|C|l!}{s!} \geq l^{l-s} \ceil{\frac{l^s}{s!}}.
\]
Since $|C|$ is an integer the first bound immediately follows. Since this holds for each $s \in [l-1]$, the second bound follows as well. 
\end{proof}
We remark that when $l \geq 5$ is odd, taking $s=2$ gives
\[
\ceil{\frac{2l^{l-2}}{l!}\ceil{\frac{l^2}{2}}}
=
\ceil{\frac{l^l+l^{l-2}}{l!}},
\]
which is a clear improvement on the lower bound in Theorem~\ref{thm: mudrock}.  The following table compares the lower bounds we get from Theorems~\ref{thm: mudrock} and~\ref{thm: uptheoddlow}.  Note that Theorem~\ref{thm: uptheoddlow} provides no improvement when $l \in \{3,4,6\}$. 
\begin{center}
\begin{tabular}{c|c|c}
\hline
$l$ & Theorem~\ref{thm: mudrock} & Theorem~\ref{thm: uptheoddlow} \\
\hline
$5$  & $27$   & $28$ \\
$7$  & $164$  & $167$ \\
$8$  & $417$  & $420$ \\
$9$  & $1068$ & $1081$ \\
$10$ & $2756$ & $2762$ \\
$11$ & $7148$ & $7207$ \\
\hline
\end{tabular}
\end{center}

\section{Upper Bounds} \label{Aparna}

In this section we first prove Theorem~\ref{thm: Aparna}.  We begin by constructing a covering family of $[q]^3$ of size 4 for each $q\geq 4$.

\begin{lem} \label{lem: 3cover}
For each integer $q\geq 4$, there exists a covering family of $[q]^3$ of size 4.
\end{lem}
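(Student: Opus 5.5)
We will build the covering family from translations in a group of order $q$. Fix an abelian group $(\Gamma,+)$ of order $q$ and identify $[q]$ with $\Gamma$. For $g\in\Gamma$ let $\tau_g$ be the translation $x\mapsto x+g$. We look for four pairs $(a_1,b_1),\dots,(a_4,b_4)\in\Gamma^2$ and take
\[
C=\{(\tau_0,\tau_{a_i},\tau_{b_i}) : i\in[4]\}.
\]
By the remark used in the proof of Proposition~\ref{pro: 4upper}, composing all coordinates with a common permutation does not change which triples are covered. So putting the identity in the first coordinate loses nothing.

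\textbf{Translating the covering condition.} Fix $(x,y,z)\in\Gamma^3$ and set $u=x-y$ and $v=x-z$. The triple $(\tau_0,\tau_{a},\tau_{b})$ covers $(x,y,z)$ exactly when the three conditions
\[
x\neq y+a,\qquad x\neq z+b,\qquad y+a\neq z+b
\]
all hold. These are equivalent to $a\neq u$, $b\neq v$ and $a-b\neq u-v$. Geometrically, $(a,b)$ must avoid three lines through the point $(u,v)$ in $\Gamma^2$: the vertical line $\{a=u\}$, the horizontal line $\{b=v\}$, and the diagonal line $\{a-b=u-v\}$.

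Now suppose the four points $(a_i,b_i)$ have pairwise distinct first coordinates, pairwise distinct second coordinates, and pairwise distinct differences $a_i-b_i$. Then every vertical, horizontal or diagonal line contains at most one of the four points. Hence the three lines through any $(u,v)$ contain at most three of them, and some $i$ survives, so $(x,y,z)$ is covered. The lemma therefore reduces to finding four points in $\Gamma^2$ with distinct $a$'s, distinct $b$'s and distinct $a-b$'s.

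\textbf{Choosing the points.} The main obstacle is small even $q$. For $q=4$ no such configuration exists in $\Z_4$, since that would be a complete mapping of $\Z_4$. So we split into cases.
\begin{itemize}
\item For $q=4$ we take $\Gamma=\Z_2\times\Z_2$. Its four elements form a complete mapping: take $b=\phi(a)$ for an automorphism $\phi$ of order $3$. Then $a\mapsto a-\phi(a)$ is also a bijection, because $1-\phi$ is invertible when $\phi$ has no nonzero fixed points.
\item For $q=5$ we take $\Gamma=\Z_5$ and the points $(i,2i)$ for $i=0,1,2,3$. The $b$'s are $0,2,4,1$ and the differences are $0,4,3,2$, all distinct.
\item For $q\geq 6$ we take $\Gamma=\Z_q$ and the points $(0,0),(1,2),(2,5),(3,1)$. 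The $a$'s, the $b$'s and the differences $0,-1,-3,2$ are integers in a range of width less than $6$, so they stay distinct mod $q$.
\end{itemize}
Finally, we verify these small cases routinely, and conclude that in each case $C$ is a covering family of $[q]^3$ of size $4$.
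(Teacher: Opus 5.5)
Your proof is correct. Its core is the same counting idea the paper uses: each of the three pairwise collision conditions can rule out at most one of the four candidates, so one candidate survives. Your reduction to four points $(a_i,b_i)$ with distinct first coordinates, distinct second coordinates, and distinct differences is exactly the argument the paper gives for its $q=6$ case, where it uses $B=\{(0,0),(1,2),(2,1),(3,5)\}$ in $\Z_6$.

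Where you differ is in how the four points are produced for $q\neq 6$. The paper writes $q=rm$ with $r\geq 4$ a prime power and identifies $[q]$ with $\mathbb{F}_r\times[m]$. It then translates the field coordinate by $\alpha_i t$ for $t$ in a $4$-subset of $\mathbb{F}_r$, using that $(\alpha_a-\alpha_b)t=x_b-x_a$ has at most one solution in a field.

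Your $q=4$ and $q=5$ cases are in fact instances of that construction. Your $\phi$ is, up to identification, multiplication by a primitive element of $\mathbb{F}_4$, and $(i,2i)$ in $\Z_5$ corresponds to $\alpha=(0,1,2)$. For all $q\geq 6$, however, you use the single configuration $(0,0),(1,2),(2,5),(3,1)$ in $\Z_q$. You combine it with the observation that distinct integers in a window of width $5$ remain distinct modulo $q\geq 6$.

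What your route buys is a more elementary and fully explicit proof. It avoids the prime-power-divisor fact and the product decomposition, at the cost of a small case split; you correctly note that $q=4$ needs a non-cyclic group. The paper's field construction is more structural. It extends verbatim to covering families of $[q]^l$ of size $\binom{l}{2}+1$ whenever $q$ has a prime-power divisor at least $\binom{l}{2}+1$. Your integer-window idea extends similarly, but only once $q$ is large relative to $l$.
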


\begin{proof}
We first suppose that $q\neq 6$.  Then, $q$ has a prime-power divisor $r\geq 4$.  So, we may write $q=rm$, where $r\geq 4$ is a prime power.  We identify $[q]$ with $\mathbb{F}_r\times[m]$ where $\mathbb{F}_r$ denotes the finite field of order $r$.  Choose pairwise distinct elements $\alpha_1,\alpha_2,\alpha_3\in\mathbb{F}_r$, and let $T$ be an arbitrary subset of $\mathbb{F}_r$ of size 4.  For each $t\in T$ and $i\in[3]$, define a permutation $\sigma_{t,i}$ of $\mathbb{F}_r\times[m]$ by
\[
\sigma_{t,i}(x,j)=(x+\alpha_i t,j).
\]
For each $t\in T$, let $\pi_t=(\sigma_{t,1},\sigma_{t,2},\sigma_{t,3})$, and let $C=\{\pi_t:t\in T\}$.

We claim that $C$ is a covering family of $[q]^3$.  Suppose
\[
\boldsymbol{y}=((x_1,j_1),(x_2,j_2),(x_3,j_3))\in(\mathbb{F}_r\times[m])^3.
\]
We say that $t\in T$ is bad for $\boldsymbol{y}$ if
\[
\sigma_{t,1}(x_1,j_1),\sigma_{t,2}(x_2,j_2),
\sigma_{t,3}(x_3,j_3)
\]
are not pairwise distinct.  If $t$ is bad for $\boldsymbol{y}$, there are distinct $a,b\in[3]$ such that
\[
x_a+\alpha_a t=x_b+\alpha_b t.
\]
Equivalently,
\[
(\alpha_a-\alpha_b)t=x_b-x_a.
\]
Since $\alpha_a\neq\alpha_b$, this equation has at most one solution $t\in\mathbb{F}_r$.  There are three pairs of distinct elements of $[3]$; so, there are at most three values of $t\in T$ that are bad for $\boldsymbol{y}$.  Since $|T|=4$, there is a $t\in T$ such that
\[
\sigma_{t,1}(x_1,j_1),\sigma_{t,2}(x_2,j_2),
\sigma_{t,3}(x_3,j_3)
\]
are pairwise distinct.  Since $\boldsymbol{y}$ was arbitrary, $C$ is a covering family of $[q]^3$ of size 4.

It remains to consider the case $q=6$.  We identify $[6]$ with $\Z_6$, and for each $a\in\Z_6$, let $\tau_a$ be the permutation of $\Z_6$ defined by $\tau_a(x)=x+a$.  Let
\[
B=\{(0,0),(1,2),(2,1),(3,5)\}
\]
and
\[
C=\{(\mathrm{id},\tau_a,\tau_b):(a,b)\in B\}.
\]
We claim that $C$ is a covering family of $[6]^3$.  Suppose $(x_1,x_2,x_3)\in\Z_6^3$.  For $(a,b)\in B$, the elements
\[
x_1,\quad x_2+a,\quad x_3+b
\]
fail to be pairwise distinct only if
\[
x_1=x_2+a,\qquad x_1=x_3+b,\qquad\text{or}\qquad x_2+a=x_3+b.
\]
The first equality can hold for at most one $(a,b)\in B$ since the first coordinates of the elements of $B$ are pairwise distinct.  Similarly, the second equality can hold for at most one $(a,b)\in B$ (since the second coordinates of the elements of $B$ are pairwise distinct).  Finally, the values of $a-b$ over all possible $(a,b) \in B$ are: 0, 5, 1, and 4
which are pairwise distinct in $\Z_6$.  Thus, the third equality can also hold for at most one $(a,b)\in B$.  Consequently, at most three elements of $C$ fail to map $(x_1,x_2,x_3)$ to a triple with pairwise distinct coordinates.  Since $|C|=4$, at least one element of $C$ maps $(x_1,x_2,x_3)$ to a triple with pairwise distinct coordinates.  Thus $C$ is a covering family of $[6]^3$ of size 4.
\end{proof}

We are now ready to prove Theorem~\ref{thm: Aparna}, which we restate.

\begin{customthm}{\ref{thm: Aparna}}
For $k \in \N$ and $k \geq 2$, let $G$ be a graph such that $\chi(G) = \chi_{DP}(G) = k$. Then, $\chi_{DP}(G \square K_{3,t}) = k+3$ whenever $t \geq 4 \left(\frac{P(G,k+2)}{k+2}\right)^3$. 
\end{customthm}

\begin{proof}
Since $k\geq 2$, we have $k+2\geq 4$.  By Lemma~\ref{lem: 3cover}, there exists a covering family $C$ of $[k+2]^3$ with $|C|=4$.  The result now follows immediately from Theorem~\ref{thm: new} with $l=3$.
\end{proof}

We end this section by proving a lemma that, together with Theorems~\ref{thm: new} and~\ref{thm: mucover}, immediately implies Theorem~\ref{thm: letzgoupper}.

\begin{lem}\label{lem: letzgoupper}
Suppose $k,l \in \mathbb{N}$ with $l \geq 2$.  Let
\[
p_{k,l}=\frac{(k+l-1)!}{(k-1)!(k+l-1)^l}
\]
and let $\lambda_{k,l}=-\log(1-p_{k,l})$.  If $k=1$, then there is a covering family of $[k+l-1]^l$ of size at most
\[
\left\lceil \frac{\log((k+l-1)^l\lambda_{k,l}/2)}{\lambda_{k,l}} \right\rceil
+
\left\lfloor \frac{1}{\lambda_{k,l}}+\frac{l}{2}\right\rfloor.
\]
If $k\geq 2$, then there is a covering family of $[k+l-1]^l$ of size at most
\[
\left\lceil \frac{\log((k+l-1)^l\lambda_{k,l}/2)}{\lambda_{k,l}} \right\rceil
+
\left\lfloor \frac{1}{\lambda_{k,l}}+\frac{1}{2}\right\rfloor.
\]
\end{lem}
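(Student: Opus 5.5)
The plan is a first-moment (random) construction followed by a deterministic clean-up. Write $q=k+l-1$ and $N=q^l$.

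\emph{Random phase.} Choose $m=\left\lceil \log(N\lambda_{k,l}/2)/\lambda_{k,l}\right\rceil$ elements of $(S_{[q]})^l$ independently and uniformly at random. For a fixed $(x_1,\dots,x_l)\in[q]^l$ and one random tuple $(\sigma_1,\dots,\sigma_l)$, the values $\sigma_1(x_1),\dots,\sigma_l(x_l)$ are independent and uniform on $[q]$. Hence they are pairwise distinct with probability $q(q-1)\cdots(q-l+1)/q^l=p_{k,l}$. So the point is left uncovered by all $m$ tuples with probability $(1-p_{k,l})^m=e^{-\lambda_{k,l}m}$. The expected number of uncovered points is therefore $Ne^{-\lambda_{k,l}m}\le 2/\lambda_{k,l}$. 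Since the number of uncovered points is an integer, some choice of the $m$ tuples leaves a set $U$ of uncovered points with $u=|U|\le\lfloor 2/\lambda_{k,l}\rfloor$.

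\emph{Clean-up phase.} The goal is to cover $U$ with few extra tuples by covering points two at a time. The key question is when a single tuple covers two given points $x,y$ simultaneously. Fix the images of $x$ to be distinct values $\sigma_i(x_i)$. On coordinates with $y_i=x_i$ the image of $y_i$ is forced. On the $f$ coordinates with $y_i\neq x_i$, the images of $y$ must be distinct, must avoid the $l-f$ already-forced values, and must satisfy $\sigma_i(y_i)\neq\sigma_i(x_i)$. This leaves $q-(l-f)$ usable symbols, with one forbidden symbol per free coordinate, and we need a system of distinct representatives.

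\begin{itemize}[nosep]
\item \emph{Case $k\ge2$.} Here there are at least $f+1$ usable symbols, and each free coordinate has at least $f$ options. Hall's condition holds: a single coordinate has at least $f\ge1$ options, and any two coordinates together see all usable symbols. So every pair of points is coverable by one tuple. Pairing up $U$ costs $\lceil u/2\rceil=\lfloor (u+1)/2\rfloor\le\lfloor 1/\lambda_{k,l}+1/2\rfloor$ tuples.
\item \emph{Case $k=1$.} Here there are exactly $f$ usable symbols, so we need a derangement on $f$ symbols. This exists iff $f\neq1$. Thus $x,y$ are jointly coverable iff their Hamming distance is not $1$. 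Take a maximum matching in the ``compatible'' graph on $U$ and cover each matched pair with one tuple and each unmatched point with its own tuple. By maximality, the unmatched points are pairwise incompatible, i.e.\ pairwise at Hamming distance $1$. Such a set must vary in a single common coordinate, so it has size $r\le l$. The cost is $(u-r)/2+r=(u+r)/2\le (u+l)/2$, and its floor is at most $\lfloor 1/\lambda_{k,l}+l/2\rfloor$.
\end{itemize}

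Adding the $m$ random tuples gives the stated bounds. This lemma, together with Theorems~\ref{thm: new} and~\ref{thm: mucover}, yields Theorem~\ref{thm: letzgoupper}.

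I expect the main obstacle to be the pairwise-compatibility analysis in the clean-up phase. This means the Hall/derangement argument, and in the case $k=1$ the claim that a pairwise-incompatible family has size at most $l$. I would check this carefully, including that distinct tuples are not needed, since repeats can simply be discarded. The rounding steps, from $u\le\lfloor 2/\lambda\rfloor$ to the final floors, are routine.
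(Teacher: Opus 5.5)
Your proposal is correct and follows essentially the same route as the paper: the same first-moment choice of $m$ random tuples leaving at most $\lfloor 2/\lambda_{k,l}\rfloor$ uncovered points, followed by pairing those points off, using a maximal matching in the compatibility graph when $k=1$, where the at most $l$ unmatched points all vary in a single coordinate. The only difference is how you show that two points can share a tuple: you use Hall's condition and derangements, whereas the paper writes the tuple down explicitly, cyclically shifting the chosen distinct values on the differing coordinates and, when $k\ge 2$ and there is a single differing coordinate, using the spare symbol $l+1$.
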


\begin{proof}
Let $n=k+l-1$, and note that if $(\sigma_1,\ldots,\sigma_l)$ is chosen uniformly at random from $(S_{[n]})^l$, then for any fixed $(a_1,\ldots,a_l)\in[n]^l$, the probability that $|\{\sigma_1(a_1),\ldots,\sigma_l(a_l)\}|=l$ is
\[
p_{k,l}= \frac{\prod_{i=0}^{l-1}((n-i)!(n-1)!/(n-i-1)!)}{(n!)^l} = \frac{n!}{(k-1)!n^l}.
\]
Now, we let $\lambda_{k,l}=-\log(1-p_{k,l})$ and
\[
m= \left\lceil \frac{\log(n^l\lambda_{k,l}/2)}{\lambda_{k,l}} \right\rceil.
\]
Choose $m$ elements of $(S_{[n]})^l$ independently and uniformly at random, and let $R$ be the set of elements of $[n]^l$ not covered by any of our $m$ randomly chosen elements of $(S_{[n]})^l$. By linearity of expectation and the fact that $m$ is defined with a ceiling,
\[
\mathbb{E}(|R|)=n^l(1-p_{k,l})^m=n^le^{-\lambda_{k,l}m} \leq \frac{2}{\lambda_{k,l}}.
\]
So, there is a collection of at most $m$ elements of $(S_{[n]})^l$ such that the number of elements in $[n]^l$ not covered by these elements is at most $\left\lfloor2/\lambda_{k,l}\right\rfloor$. Let $A$ be such a collection, and let $C$ be the set of elements in $[n]^l$ not covered by any element in $A$.

Now, suppose that $k\geq2$. We claim that any two distinct elements $\boldsymbol{x}=(x_1,\ldots,x_l)$ and $\boldsymbol{y}=(y_1,\ldots,y_l)$ of $[n]^l$ can be covered by one element of $(S_{[n]})^l$. Suppose first that $\boldsymbol{x}$ and $\boldsymbol{y}$ differ in at least two coordinates. Choose pairwise distinct $a_1,\ldots,a_l\in[n]$. Let $I=\{i\in[l]:x_i\neq y_i\}$. By cyclically permuting the elements $a_i$ with $i\in I$, we obtain pairwise distinct elements $b_1,\ldots,b_l$ such that $b_i=a_i$ when $i\notin I$ and $b_i\neq a_i$ when $i\in I$. Now we will construct a $(\sigma_1,\ldots,\sigma_l)\in(S_{[n]})^l$. First, for each $i\in[l]$ let $\sigma_i(x_i)=a_i$ and $\sigma_i(y_i)=b_i$. This is possible since $a_i=b_i$ if and only if $x_i=y_i$. Second, arbitrarily extend $\sigma_i$ to a permutation of $[n]$ for each $i\in[l]$. Clearly, $(\sigma_1,\ldots,\sigma_l)$ covers both $\boldsymbol{x}$ and $\boldsymbol{y}$.

Next suppose $\boldsymbol{x}$ and $\boldsymbol{y}$ differ in exactly one coordinate, and suppose that coordinate is $j$. Choose pairwise distinct $a_1,\ldots,a_l\in[l]$, and note that since $k\geq2$, $l+1\in[n]$. Now we will construct a $(\sigma_1,\ldots,\sigma_l)\in(S_{[n]})^l$. First, for each $i\in[l] - \{j\}$ let $\sigma_i(x_i)=\sigma_i(y_i)=a_i$. Second, let $\sigma_j(x_j)=a_j$ and $\sigma_j(y_j)=l+1$. Finally, extend $\sigma_i$ to a permutation of $[n]$ for each $i\in[l]$. It is easy to see that $(\sigma_1,\ldots,\sigma_l)$ covers both $\boldsymbol{x}$ and $\boldsymbol{y}$.

Now, arbitrarily pair the elements of $C$, and note there will be at most one element of $C$ not paired. We know that we can cover the elements of $C$ with at most $\lceil |C|/2\rceil$ elements of $(S_{[n]})^l$. Thus, there is a covering family of $[n]^l$ of size at most $|A|+\lceil |C|/2\rceil$. The desired result for $k\geq2$ follows from
\[
\left\lceil\frac{|C|}{2}\right\rceil \leq \left\lfloor\frac{1}{\lambda_{k,l}}+\frac12\right\rfloor.
\]

Now, suppose that $k=1$ which means $n=l$. Two distinct elements of $[l]^l$ can be covered by one element of $(S_{[l]})^l$ whenever at least two of their coordinates disagree by the same cyclic permutation argument given above for $k\geq2$. On the other hand, two elements of $[l]^l$ that differ in exactly one coordinate cannot be covered by one element of $(S_{[l]})^l$. Indeed, the two resulting $l$-tuples of images would each have to be permutations of $[l]$, while agreeing in $l-1$ coordinates and disagreeing in exactly one coordinate which is impossible.

Let $G$ be the graph with vertex set $C$ with edges defined so that two elements of $C$ are adjacent in $G$ if and only if they differ in at least two coordinates. Furthermore, let $M$ be a maximal matching in this graph. Any vertices in $C$ not saturated by $M$ pairwise differ in exactly one coordinate. In fact, if two of them differ in coordinate $j$, then every other unsaturated vertex must agree with them in every coordinate other than $j$. Therefore there are at most $l$ vertices in $C$ not saturated by $M$. Let $t$ be the number of vertices in $C$ not saturated by $M$, and note that $t$ and $|C|$ have the same parity. So, we can cover the elements of $C$ with at most $(|C|+t)/2$ elements of $(S_{[l]})^l$. Thus, there is a covering family of $[l]^l$ of size at most $|A|+(|C|+t)/2$. The desired result for $k=1$ follows from
\[
\frac{|C|+t}{2}\leq \left\lfloor\frac{|C|+l}{2}\right\rfloor \leq \left\lfloor\frac{1}{\lambda_{k,l}}+\frac{l}{2}\right\rfloor.
\]
\end{proof}

The $k=1$ portion of Theorem~\ref{thm: letzgoupper} now follows from Lemma~\ref{lem: letzgoupper} and Theorem~\ref{thm: mucover}, while the $k\geq2$ portion follows from Lemma~\ref{lem: letzgoupper} and Theorem~\ref{thm: new}.  Our final table summarizes the bounds on $\mu(l)$ obtained in this paper together with the previously known bounds from Theorem~\ref{thm: mudrock}.  The middle column gives the known exact values of $\mu(l)$.

\begin{center}
\begin{tabular}{c|cc|c|cc}
\hline
& \multicolumn{2}{c|}{Lower Bounds} & & \multicolumn{2}{c}{Upper Bounds}\\
$l$ & Thm.~\ref{thm: mudrock} & Thm.~\ref{thm: uptheoddlow} & Exact $\mu(l)$ & Thm.~\ref{thm: letzgoupper} & Thm.~\ref{thm: mudrock}\\
\hline
$2$  & $2$    & --     & $2$  & $3$      & $4$\\
$3$  & $5$    & $5$    & $6$  & $10$     & $13$\\
$4$  & $11$   & $11$   & $12$ & $38$     & $45$\\
$5$  & $27$   & $28$   & ?    & $134$    & $151$\\
$6$  & $65$   & $65$   & ?    & $446$    & $492$\\
$7$  & $164$  & $167$  & ?    & $1443$   & $1557$\\
$8$  & $417$  & $420$  & ?    & $4539$   & $4829$\\
$9$  & $1068$ & $1081$ & ?    & $13993$  & $14736$\\
$10$ & $2756$ & $2762$ & ?    & $42467$  & $44380$\\
$11$ & $7148$ & $7207$ & ?    & $127291$ & $132249$\\
\hline
\end{tabular}
\end{center}
It would be interesting to determine the exact value of $\mu(5)$.  Theorem~\ref{thm: uptheoddlow} gives $\mu(5)\geq28$.  Our computational experiments have produced a covering family of $[5]^5$ of size 46, although we suspect that the actual value of $\mu(5)$ is much closer to 28.  Since we expect that this computational upper bound is far from optimal, we have chosen not to include the corresponding covering family here.

\section*{Acknowledgments}
This paper is partially a result of a visit by the second named author to the fourth named author's institution. The second author's visit was supported by the University of South Alabama. The support of the University of South Alabama is gratefully acknowledged.

\appendix

\section{Verification of the Covering Family for $[4]^4$}
\label{Appendmu4}
We verify that the proposed covering family used in the proof of Proposition~\ref{pro: 4upper} is indeed a covering family. Recall that for each $j\in[12]$, we let
\[
\pi_j=(\sigma_{j,1},\sigma_{j,2},\sigma_{j,3},\sigma_{j,4}),
\]
where the permutations are given in the proof of Proposition~\ref{pro: 4upper}. As in the proof, for each $j\in[12]$, let $A_j$ consist of all $(a_1,a_2,a_3,a_4)\in[4]^4$ such that
\[
\{\sigma_{j,1}(a_1),\sigma_{j,2}(a_2),
\sigma_{j,3}(a_3),\sigma_{j,4}(a_4)\}=[4].
\]
For each $(a_1,a_2,a_3,a_4)\in[4]^4$, we say that the multiplicity of $(a_1,a_2,a_3,a_4)$ is the number of $j \in [12]$ satisfying $(a_1,a_2,a_3,a_4)\in A_j$. The following short computation verifies the multiplicity of each element of $[4]^4$. In fact, it verifies that exactly 240 elements of $[4]^4$ have multiplicity one and the remaining elements have multiplicity three.

\begin{verbatim}
from itertools import product
from collections import Counter

P = [
    [(1,2,3,4),(3,2,4,1),(3,2,1,4),(3,1,4,2)],
    [(1,2,3,4),(1,3,2,4),(3,2,1,4),(1,4,2,3)],
    [(1,2,3,4),(1,3,2,4),(1,4,3,2),(2,4,1,3)],
    [(1,2,3,4),(4,3,2,1),(2,3,4,1),(1,2,3,4)],
    [(1,2,3,4),(2,1,3,4),(3,2,1,4),(2,1,3,4)],
    [(1,2,3,4),(4,2,1,3),(3,2,1,4),(4,2,1,3)],
    [(1,2,3,4),(4,2,1,3),(1,4,3,2),(1,2,4,3)],
    [(1,2,3,4),(2,1,3,4),(1,4,3,2),(3,1,2,4)],
    [(1,2,3,4),(2,1,4,3),(2,1,4,3),(1,4,3,2)],
    [(1,2,3,4),(4,1,2,3),(4,3,2,1),(3,4,1,2)],
    [(1,2,3,4),(2,3,4,1),(4,1,2,3),(3,2,1,4)],
    [(1,2,3,4),(3,2,4,1),(1,4,3,2),(4,1,3,2)]
]

multiplicity = []

for a in product(range(1,5), repeat=4):
    count = 0
    for row in P:
        image = {row[i][a[i]-1] for i in range(4)}
        if len(image) == 4:
            count += 1
    multiplicity.append(count)

dist = Counter(multiplicity)

print("Multiplicity distribution:")
for m in sorted(dist):
    print("multiplicity", m, ":", dist[m], "tuples")

print("Total tuples:", len(multiplicity))
print("Minimum multiplicity:", min(multiplicity))
print("Maximum multiplicity:", max(multiplicity))
\end{verbatim}

The output is

\begin{verbatim}
Multiplicity distribution:
multiplicity 1 : 240 tuples
multiplicity 3 : 16 tuples
Total tuples: 256
Minimum multiplicity: 1
Maximum multiplicity: 3
\end{verbatim}

Thus, every element of $[4]^4$ belongs to at least one of the sets $A_j$.

\bibliographystyle{hplain}
\bibliography{bibliography}

\section*{Statements and Declarations}

\paragraph{Funding.}
The second named author's visit to the fourth named author's institution
was supported by the University of South Alabama.

\paragraph{Competing interests.}
The authors have no relevant financial or non-financial interests to disclose.

\paragraph{Data availability.}
No datasets were generated or analyzed during the current study.

\paragraph{Code availability.}
The code and solver output used for the integer programming computation
in the proof of Proposition~\ref{pro: 4upper} are publicly available in
\cite{M26comp}.

\paragraph{Declaration of generative AI and AI-assisted technologies in
the manuscript preparation process.}
During the preparation of this manuscript, the authors used ChatGPT
(OpenAI) and Claude (Anthropic) for limited language editing, assistance
in generating and reviewing code, and preliminary discussions of proof
ideas. All AI-assisted material was critically reviewed, revised, and
independently verified by the authors. The authors take full
responsibility for the mathematical arguments, computational results,
and final content of the manuscript.

\end{document}